\newcommand{\g}{\geqslant}
\newcommand{\RR}{\mathbb{R}}
\newcommand{\ZZ}{\mathbb{Z}}
\newcommand{\CC}{\mathbb{C}}
\newcommand{\NN}{\mathbb{N}}
\newcommand{\p}{\partial}
\newcommand{\les}{\leqslant}
\newcommand{\lesa}{\lesssim}
\newcommand{\supp}{\text{supp }\,}
\theoremstyle{plain}
\newtheorem{theorem}{Theorem}
\newtheorem{proposition}[theorem]{Proposition}
\newtheorem{lemma}[theorem]{Lemma}
\newtheorem{corollary}[theorem]{Corollary}
\title[Global existence for a nonlinear Dirac Equation]{Global existence for an $L^2$ critical nonlinear Dirac equation in one dimension }
\author[Timothy Candy]{Timothy Candy \vspace{0.4cm}\\
\textit{D\lowercase{epartment} \lowercase{of} M\lowercase{athematics}, U\lowercase{niversity} \lowercase{of} E\lowercase{dinburgh}\\
E\lowercase{dinburgh} EH9 3JE, U\lowercase{nited} K\lowercase{ingdom} \\
E\lowercase{mail}: T.L.C\lowercase{andy}@\lowercase{sms.ed.ac.uk}}}
\date{\today}
\thanks{AMS Subject Classification: 35Q41, 35A01.}
\begin{document}

\begin{abstract}
We prove global existence from $L^2$ initial data for a nonlinear Dirac equation known as the Thirring model \cite{Thirring1958}. Local existence in $H^s$ for $s>0$, and global existence for $s>\frac{1}{2}$, has recently been proven by Selberg and Tesfahun in \cite{Selberg2010b} where they used $X^{s, b}$ spaces together with a type of null form estimate. In contrast, motivated by the recent work of Machihara, Nakanishi, and Tsugawa, \cite{Machihara2010}  we first prove local existence in $L^2$ by using null coordinates, where the time of existence depends on the profile of the initial data. To extend this to a global existence result we need to rule out concentration of $L^2$ norm, or charge, at a point. This is done by decomposing the solution into an approximately linear component and a component with improved integrability. We then prove global existence for all $s\g0$.

\end{abstract}

\maketitle

\section{Introduction}
We consider the nonlinear Dirac equation
        \begin{equation}\label{Dirac1} \begin{split} -i \gamma^\mu \p_\mu u + m u&= \lambda (\overline{u} \gamma^\mu u ) \gamma_\mu u \\
                                                                         u(0)&=u_0\end{split}\end{equation}
where $u$ is a $\CC^2$ valued function of $(t,x)\in\RR^{1+1}$, and $m, \lambda \in \RR$. Indices are raised and lowered with respect to the Minkowski metric diag$(-1, 1)$, and repeated indices are summed over $\mu=0, 1$. The Dirac matrices $\gamma^{\mu}$ are defined by
        $$ \gamma^0 = \begin{pmatrix} 0 & 1 \\ 1 & 0 \end{pmatrix}, \qquad \gamma^1 = \begin{pmatrix} 0 & -1 \\ 1 & 0 \end{pmatrix} $$
and for a vector valued function $u$ we let $\overline{u}= u^{\dagger} \gamma^0$, where $u^{\dagger}$ denotes the conjugate transpose. The nonlinear Dirac equation (\ref{Dirac1}) is also known as the Thirring model and describes the vector self interaction of a Dirac field, see \cite{Thirring1958}. Classical solutions to $(\ref{Dirac1})$ satisfy conservation of charge
            $$ \| u(t) \|_{L^2_x} = \| u_0\|_{L^2_x}.$$
The scale invariant space is the charge class $L^2$, thus the equation is $L^2$ critical and so we expect the global well-posedness result proved below is sharp. However, we have no explicit counterexample to well-posedness for $s<0$.\\

Let $u= \begin{pmatrix} \psi \\ \phi \end{pmatrix}$ and $u_0= \begin{pmatrix} f \\ g \end{pmatrix}$. Writing out the equation (\ref{Dirac1}) in terms of $\phi$ and $\psi$ we obtain the system
\begin{equation}\label{Dirac}
        \begin{split}  \p_t \psi + \p_x \psi &= -i m \phi - i 2\lambda |\phi|^2 \psi \\
                        \p_t \phi - \p_x \phi &= -im \psi - i 2\lambda |\psi|^2 \phi\\
                        \big(\psi, \phi \big)^T(0) &= (f, g)^T \end{split} \end{equation}
where we take $f, g \in H^s$. In the classical case, $s\g1$, global existence was first proved by Delgado in \cite{Delgado1978} where he noticed that if $(\psi, \phi)$ is a  solution to (\ref{Dirac}), then $(|\psi|^2, |\phi|^2)$ satisfies a quadratic nonlinear Dirac equation. Thus, particularly for global in time problems, the nonlinearity is milder for the square of the solution. Together with Gronwall's inequality, Delgado used this  quadratic nonlinear Dirac equation to obtain an a priori bound on the $L^{\infty}$ norm of the solution. Since the time of existence can be shown to depend only on the $L^\infty$ norm, an application of the Sobolev embedding theorem shows the solution exists globally in time.

More recently, Selberg and Tesfahun used the $X^{s, b}$ spaces together with the null form type estimate\footnote{The term null form estimate is used for (\ref{null}) as the inequality relies crucially on the structure of the nonlinear term. In particular if we replace $|\phi|^2 \psi$ with $|\psi|^2 \psi$ then this estimate fails. The observation that null structure is needed to prove low regularity existence for nonlinear wave equations is due to Klainerman and Machedon in the seminal paper \cite{Klainerman1993}.}
					\begin{equation}\label{null} \big\| |\phi|^2 \psi \big\|_{X^{s, b-1 +\epsilon}_{+}} \lesa \| \phi\|_{X^{s, b}_-}^2 \|\psi \|_{X^{s, b}_+}\end{equation}
to prove local existence in the almost critical case $s>0$, where $X^{s, b}_+$ and $X^{s, b}_-$ are the $X^{s, b}$ spaces adapted to the linear propagators in (\ref{Dirac}) see \cite{Selberg2010b}. This estimate fails at the endpoint\footnote{  This can be seen by letting $\psi$ and $\phi$ be the relevant homogeneous solutions.} $s=0$ and so the approach using standard $X^{s, b}$ spaces seems limited to the case $s>0$. We also mention that the paper \cite{Selberg2010b} included global existence for $s>\frac{1}{2}$ by using the method of Delgado referred to above.

 Other  nonlinear Dirac equations in one dimension have also been studied. In \cite{Dias1986} and \cite{Machihara2007a} the closely related nonlinearity $|u|^2 u$ was considered. Local well-posedness results for quadratic nonlinearities have appeared in \cite{Bournaveas2008}, \cite{Machihara2007}, \cite{Machihara2005}, and \cite{Selberg2010b}. \\
\newline

In the current article we use null coordinates to prove global existence in $H^s$ for all $s \g0$, similar to the method used in the recent work of Machihara, Nakanishi, and Tsugawa \cite{Machihara2010}. The use of null coordinates has certain advantages over using the $X^{s, b}$ framework as we can work exclusively in the spatial domain and make use of the embedding $W^{1,1} \subset L^\infty$. Furthermore the local existence component of the proof is surprisingly straightforward. Once we change into null coordinates we will be forced to localise in both space and time. In the $L^2$ case this is not an issue as the Dirac equation satisfies finite speed of propagation. However, when trying to extend the global existence result to $s>0$, localising in both space and time will prove to be a little inconvenient and some technical results on localised Sobolev spaces will be required.

The time of existence of the local solution obtained below depends on the profile of the initial data. As a consequence, the conservation of charge property does not imply global existence. This is to be expected as we are dealing with an equation at a scale invariant regularity, see for instance \cite{Tao2006b} for a discussion related to the problem of proving global existence for the energy critical wave equation.  Thus, to obtain a global in time result, we need to have some control over the profile of the solution. This is done by modifying the approach of Delgado. Note that in previous works, the method of Delgado gave $L^\infty$ control of the solution provided the initial data belonged to $L^\infty$. Here however, we are working with low regularity solutions  and have no $L^\infty$ control over the initial data. So a new idea is required.

The way forward is to decompose our solution into two components. We show that the first of these components satisfies an essentially linear equation, while the second component has additional integrability, see Proposition \ref{decomp}. We remark that, since the Dirac equation in one dimension is roughly a coupled transport equation, the solution does not disperse. So generically we should not expect the solution to have any better integrability than the initial data. Thus the fact that we can decompose our solution into a linear piece and an $L^\infty$ piece is quite remarkable.
\newline

We now state our main result.

\begin{theorem}\label{global}
Let $s \g 0$ and $f, g \in H^s$. There exists a global solution $(\psi, \phi) \in C(\RR, H^s)$ to (\ref{Dirac}) such that the charge is conserved, so
            $$  \|\psi(t) \|_{L^2_x}^2 + \| \phi(t)\|_{L^2_x}^2  = \| f\|^2_{L^2_x} + \| g\|_{L^2_x}^2$$
for every $t \in \RR$. Moreover, the solution is unique in a subspace of $C(\RR, L^2_{loc})$ and we have continuous dependence on initial data.

\end{theorem}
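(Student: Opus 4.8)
The plan is to combine a profile‑dependent local existence theorem in $L^2$, obtained via null coordinates, with the non‑concentration of charge coming from Proposition~\ref{decomp}, and finally to propagate $H^s$ regularity. \textbf{Step 1 (local existence in $L^2$).} Passing to null coordinates $y=\frac{t+x}{2}$, $z=\frac{x-t}{2}$, the system (\ref{Dirac}) becomes $\p_y\psi=-im\phi-2i\lambda|\phi|^2\psi$ and $\p_z\phi=im\psi+2i\lambda|\psi|^2\phi$, so $\psi$ is transported in the $y$–direction and $\phi$ in the $z$–direction, with Cauchy data on the diagonal $\{y=z\}$. Since the equation has finite speed of propagation it suffices to solve on a box $Q=I\times I$, where I would run a Picard iteration for pairs $(\psi,\phi)$ with $\psi\in C_yL^2_z(Q)$ and $\phi\in C_zL^2_y(Q)$. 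The only nonlinear estimate needed is an $L^2$–bound for $|\phi|^2\psi$: here one uses that $\phi$ solves an ODE in $z$, so by the one–dimensional embedding $W^{1,1}\subset L^\infty$ (applied in the transversal null variable) $\phi$ is controlled in $L^\infty(Q)$ by its $L^2$–data on an edge of $Q$ together with the $L^1_z$–norm of $\psi$, which on a bounded box is dominated by its $L^2_z$–norm; symmetrically for $\psi$. Hence the iteration is a contraction once $|I|$ and the $L^2$–mass of the data on $Q$ are small, and the local time admits a lower bound depending only on those two quantities. Given $f,g\in L^2$ one partitions $\RR$ into intervals on each of which $\|f\|_{L^2}+\|g\|_{L^2}$ is below the smallness threshold; the minimal length of such an interval — which depends on the \emph{profile} of $(f,g)$, not merely on $\|(f,g)\|_{L^2}$ — gives a lower bound for the time of existence. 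Patching the local solutions via finite speed of propagation gives $(\psi,\phi)\in C([0,T],L^2_x)$ for some $T=T(f,g)>0$, and likewise for negative times.

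\textbf{Step 2 (non‑concentration and global existence in $L^2$).} Let $[0,T^*)$ be the maximal forward interval and suppose $T^*<\infty$. By Proposition~\ref{decomp} write $\psi=\psi_\ell+\psi_\infty$, $\phi=\phi_\ell+\phi_\infty$ on $[0,T^*)\times\RR$, with $(\psi_\ell,\phi_\ell)$ solving an essentially linear equation and $(\psi_\infty,\phi_\infty)$ bounded in $L^\infty_{t,x}$ with norm depending only on $T^*$ and $\|(f,g)\|_{L^2}$. For the linear part the charge density is, up to mass terms, merely transported along the null directions, so for every $r>0$
\[ \sup_{0\les t<T^*}\ \sup_{x_0\in\RR}\ \big\|(\psi_\ell,\phi_\ell)(t)\big\|_{L^2(|x-x_0|<r)} \lesa \sup_{x_0\in\RR}\ \big\|(f,g)\big\|_{L^2(|x-x_0|<Cr)} =: \omega(r), \]
and since $f,g\in L^2$ the measure $(|f|^2+|g|^2)\,dx$ has no atoms, so $\omega(r)\to0$ as $r\to0$. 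For the $L^\infty$ part Hölder gives $\big\|(\psi_\infty,\phi_\infty)(t)\big\|_{L^2(|x-x_0|<r)}\lesa r^{1/2}\|(\psi_\infty,\phi_\infty)\|_{L^\infty_{t,x}}$ uniformly in $t<T^*$ and $x_0$. Combining, for every $\epsilon>0$ there is $r_0>0$ with
\[ \sup_{0\les t<T^*}\ \sup_{x_0\in\RR}\ \big\|(\psi,\phi)(t)\big\|_{L^2(|x-x_0|<r_0)} < \epsilon . \]
Now fix $\epsilon$ below the smallness threshold of Step 1. For any $t_0\in[0,T^*)$ the data $(\psi,\phi)(t_0)$ can be partitioned into intervals of length $r_0$ on each of which its $L^2$–norm is $<\epsilon$, so by Step 1 the local existence time starting at $t_0$ is bounded below by some $\delta=\delta(r_0)>0$ \emph{independent of $t_0$}; taking $t_0>T^*-\delta$ extends the solution past $T^*$, contradicting maximality. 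Hence $T^*=+\infty$, and similarly the solution is global in the past. Conservation of charge passes to the limit from smooth approximations.

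\textbf{Step 3 (regularity $s>0$, uniqueness, continuous dependence).} For $f,g\in H^s$ with $s>0$, differentiate the system in null coordinates and estimate the $H^s$–norm (equivalently a localised Besov norm) using again $W^{1,1}\subset L^\infty$ together with the now global $L^2$/$L^\infty$ bounds of Step 2; a Gronwall argument keeps the $H^s$–norm finite on every compact time interval, giving $(\psi,\phi)\in C(\RR,H^s)$. This is where the technical facts on localised Sobolev spaces enter, the null–coordinate argument having been localised in both space and time. Uniqueness in the relevant subspace of $C(\RR,L^2_{loc})$ follows from uniqueness in the contraction mapping, glued by finite speed of propagation, and continuous dependence follows from the Lipschitz bounds underlying the iteration.

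\textbf{Main obstacle.} The crux is Step 2: one needs the decomposition of Proposition~\ref{decomp} to come with an $L^\infty$–bound on $(\psi_\infty,\phi_\infty)$ that is \emph{uniform on the whole maximal interval $[0,T^*)$} — i.e. the good component must not itself blow up as $t\to T^*$. This is the modified Delgado mechanism: the key algebraic fact is that in the equation satisfied by $(|\psi|^2,|\phi|^2)$ the cubic term drops out identically (since $\mathrm{Re}(i|\psi|^2)=0$), leaving $(\p_t+\p_x)|\psi|^2=-(\p_t-\p_x)|\phi|^2=-2m\,\mathrm{Im}(\overline{\psi}\phi)$, whose right‑hand side is pointwise $\lesa |m|(|\psi|^2+|\phi|^2)$; one must check that the resulting estimate genuinely closes on the conserved charge alone, with no hidden dependence on a norm that could degenerate at $T^*$. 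Reconciling "the linear part's charge profile is only transported, hence non‑concentrating" with "the data is merely $L^2$" is then routine, via the absence of atoms for $(|f|^2+|g|^2)\,dx$.
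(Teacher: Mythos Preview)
Your Steps 1 and 2 are essentially the paper's argument: local existence via a null-coordinate contraction in the $Y_R\times X_R$ spaces (Theorems~\ref{localised} and~\ref{local}), followed by non-concentration from Proposition~\ref{decomp} exactly as you describe. One small omission: Proposition~\ref{decomp} is stated only for smooth solutions, so the paper first reduces (\ref{global eqn1}) to the case $f,g\in C^\infty_0$ via the continuous dependence already established; you apply the decomposition directly to the rough solution on $[0,T^*)$ without this justification.

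The real gap is Step 3. A direct Gronwall on $\|(\psi,\phi)(t)\|_{H^s}$ does not close: since $s<\tfrac12$, the product estimate for $\big\||\phi^*|^2\psi^*\big\|_{H^s_\beta}$ needs not only $\|\phi^*\|_{L^\infty_\beta}$ but also a $\beta$-derivative of $\phi^*$ in some $L^q$ with $q>1$ (Theorem~\ref{Intrins}(iii)). The ``$L^2/L^\infty$ bounds of Step 2'' do not supply this. The paper therefore works in augmented spaces $Y^s,X^s$ carrying $\|\p_\alpha\psi^*\|_{L^q_\alpha L^2_\beta}$ and $\|\p_\beta\phi^*\|_{L^q_\beta L^2_\alpha}$, and the resulting a priori bound (Theorem~\ref{local persist}) is a smallness/continuity argument, not Gronwall: it closes only when $\|f\|_{L^p}+\|g\|_{L^p}$ and $|m|$ are below a fixed threshold $\epsilon^*$, where $\tfrac1p=\tfrac12-s$. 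The smallness is then removed by \emph{scaling}: since $p>2$, the rescaling $f_\tau=\tau^{1/2}f(\tau\,\cdot)$ drives $\|f_\tau\|_{L^p}\to0$, and---crucially---Proposition~\ref{decomp} with $2<p<\infty$ bounds $\|(\psi,\phi)(t)\|_{L^p}$ on every compact time interval, so the local $H^s$ existence time (which depends only on the $L^p$ norm of the data, not on its $H^s$ norm) never collapses. This $L^p$-based iteration, together with the reliance on Selberg--Tesfahun's $H^s$ local theory for the reduction to $0<s<\tfrac14$ and for continuous dependence, is the mechanism your sketch is missing.
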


The first step in the proof of Theorem \ref{global} is the following local in time result.

\begin{theorem}\label{local}
Let $f, g \in L^2$. There exists $T>0 $ such that we have  a solution $(\psi, \phi) \in C\big([-T, T], L^2\big)$ to (\ref{Dirac}). Moreover, the solution is unique in a subspace of $C\big([-T, T], L^2_{loc}\big)$ and we have continuous dependence on initial data.
\end{theorem}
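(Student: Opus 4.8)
\emph{Proof sketch.} The plan is to pass to characteristic coordinates, rewrite (\ref{Dirac}) as a coupled system of transport equations which integrate explicitly along characteristics, and run a contraction argument in a space of mixed Lebesgue norms adapted to the two null directions. Set $\xi = t+x$ and $\eta = t-x$, so that $\p_t+\p_x = 2\p_\xi$ and $\p_t-\p_x = 2\p_\eta$, and regard $\psi,\phi$ as functions of $(\xi,\eta)$. Then (\ref{Dirac}) becomes
\[ 2\p_\xi \psi = -im\phi - 2i\lambda|\phi|^2\psi, \qquad 2\p_\eta \phi = -im\psi - 2i\lambda|\psi|^2\phi, \]
with the data carried on the line $\xi+\eta=0$. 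For fixed $\phi$ the first equation is a linear ODE in $\xi$ along each ray $\{\eta=\text{const}\}$; integrating with the obvious integrating factor gives
\[ \psi(\xi,\eta) = e^{-i\lambda\int|\phi(\cdot,\eta)|^2} \Big( f(-\eta) - \tfrac{im}{2}\int e^{i\lambda\int|\phi(\cdot,\eta)|^2}\phi(\cdot,\eta) \Big), \]
where the integrals run in the first variable along $\{\eta=\text{const}\}$ starting from the data line, and symmetrically for $\phi$ in terms of $\psi$ and $g$. The key structural point is that the cubic nonlinearity enters only through a \emph{phase} of modulus one, so it contributes no growth of $L^2$ norms, and in difference estimates it is controlled via $|e^{ia}-e^{ib}|\les|a-b|$.

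I would then solve by contraction on the pair $(\psi,\phi)$ in a ball of the space $L^\infty_\xi L^2_\eta \times L^\infty_\eta L^2_\xi$ over a bounded rectangle $Q$ in the $(\xi,\eta)$ variables. One must localise in space as well as time; for $L^2$ data this costs nothing, since the transport structure of (\ref{Dirac}) gives finite speed of propagation, so solutions built on a covering of $\RR$ by bounded spatial intervals can afterwards be glued using the conditional uniqueness. These norms are exactly matched to the nonlinearity: along a characteristic $\{\eta=\eta_0\}$ of the $\psi$-equation one has $\int|\phi(\xi',\eta_0)|^2\,d\xi' = \|\phi(\cdot,\eta_0)\|_{L^2_\xi}^2 \les \|\phi\|_{L^\infty_\eta L^2_\xi}^2$, and the analogous identity holds for $|\psi|^2$ along characteristics of the $\phi$-equation, so both the potential and the mass Duhamel terms are estimated with no loss purely in terms of the propagated norms; the $L^2$-factors in the transport directions are inherited directly from $\|f\|_{L^2_x}$, $\|g\|_{L^2_x}$, and the transverse $L^\infty$-factors from the pointwise ODE bound along characteristics.

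The time length of $Q$ is where the profile of the data enters. The contraction constant is controlled by $\sup_\eta\int_I|g(\xi')|^2\,d\xi'$ and the analogue for $f$, over intervals $I$ whose length is comparable to the time span; since $|f|^2+|g|^2\in L^1(\RR)$, absolute continuity of the Lebesgue integral forces this quantity to $0$ uniformly in $\eta$ as $|I|\to0$ --- but at a rate depending on $f$ and $g$ themselves, not merely on their $L^2$ norms. Shrinking $Q$ in time accordingly yields a contraction, and hence a unique fixed point on each rectangle; continuity of $t\mapsto(\psi(t),\phi(t))$ into $L^2_x$ follows from the explicit formulas together with $L^2$-continuity of translation and dominated convergence for the phase, and continuous dependence is the standard Lipschitz consequence of the fixed point (one checks that data $L^2$-close to $(f,g)$ eventually share a comparable modulus of absolute continuity for $|f|^2+|g|^2$, so a uniform time of existence is available along any convergent sequence).

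The main obstacle is conceptual rather than computational: choosing the right space --- the mixed null-coordinate norms --- in which the cubic term is tame, and recognising that the natural time of existence is dictated by the (profile-dependent) absolute continuity of $|f|^2$ and $|g|^2$ rather than by their size. As the introduction anticipates, once the space is fixed the transport estimates are routine. Finally, the uniqueness obtained is conditional, holding within the ball of $L^\infty_\xi L^2_\eta \times L^\infty_\eta L^2_\xi$ on each rectangle, which is the subspace of $C([-T,T],L^2_{loc})$ referred to in the statement.
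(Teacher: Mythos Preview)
Your argument is correct, and it takes a genuinely different route from the paper's proof. The paper does not use the integrating factor. Instead it sets up the iteration directly on the Duhamel form of (\ref{Dirac}) in the spaces $Y_R\times X_R$, whose norms carry an extra $L^1$ null derivative,
\[
\|\psi\|_{Y_R}=\|\psi^*\|_{L^\infty_\alpha L^2_\beta(\Omega_R)}+\|\p_\alpha\psi^*\|_{L^1_\alpha L^2_\beta(\Omega_R)},
\]
and then controls the cubic term by H\"older together with the embedding $\|\phi^*\|_{L^2_\alpha L^\infty_\beta}\lesa\|\phi\|_{X_R}$ (essentially $W^{1,1}\subset L^\infty$ in the $\beta$-direction). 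Your trick of writing the cubic as a pure phase sidesteps this embedding entirely: once $|\phi|^2\psi$ sits in the exponent, the map is bounded in the bare $L^\infty_\xi L^2_\eta\times L^\infty_\eta L^2_\xi$ norms, and the contraction comes from $|e^{ia}-e^{ib}|\les|a-b|$ plus the observation that $\int|\phi(\cdot,\eta)|^2\,d\xi\les\|\phi\|_{L^\infty_\eta L^2_\xi}^2$. The localisation in space, the profile-dependent choice of $T$ via absolute continuity of $|f|^2+|g|^2$, and the gluing by uniqueness are then the same in both arguments.

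What each approach buys: yours is the more elementary route to the $L^2$ result and makes the tameness of the nonlinearity completely transparent. The paper's choice of the stronger $Y_R,X_R$ norms is deliberate, however: it remarks that norms of the form $L^2_\alpha L^\infty_\beta$ (or, equivalently, your integrating-factor reduction to $L^\infty L^2$) would already suffice for $s=0$, but the derivative control built into $Y_R,X_R$ is what is later exploited in the persistence-of-regularity argument for $s>0$, where one needs product estimates in $H^s$ and the nonlinear phase does not commute cleanly with fractional derivatives.
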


In Theorem \ref{local} we require $T>0$ to satisfy,  for every $x \in \RR$,
				$$ \int_{|x-y|<2T} |f|^2 + |g|^2 dy < \epsilon$$
for a small $\epsilon>0$. Thus, as remarked above,  conservation of charge does not immediately lead to global existence.
\newline

We now give a brief outline of this article. In Section \ref{preliminaries} we introduce the function spaces we iterate in, as well the estimates we need for the proof of Theorem \ref{local}. Section \ref{local proof} contains the proof of Theorem \ref{local} and in Section \ref{global proof} we  prove Theorem \ref{global} in the case $f, g \in L^2$. Section \ref{section persist} extends the global result to $s>0$. Finally in the Appendix we have collected the proofs of some results on local Sobolev spaces which we require in Section \ref{section persist}.

\subsection*{Acknowledgements} The author thanks his supervisor Nikolaos Bournaveas, and also Shuji Machihara, for their encouragement and for many helpful discussions and suggestions. Thanks must also go to Selberg and Tesfahun, and  Machihara, Nakanishi, and Tsugawa, for making available preprints of their respective papers \cite{Selberg2010b} and \cite{Machihara2010}. These results will form part of the author's PhD thesis.

\subsection{Notation}
Throughout this paper $C$ denotes a positive constant which can vary from line to line. The notation $a \lesa b $ denotes the inequality $a \les C b$. If we wish to make explicit the fact that $C$ depends on a quantity $\Gamma$, we use a subscript $a \lesa_\Gamma b$.
 For a set $I \subset \RR$ and $1\les p \les \infty$ we let $L^p(I)$ denote the usual Lebesgue space.  We also use  the mixed version $L^p_t L^q_x(I_1 \times I_2)$ with norm
            $$ \|  u \|_{L^p_t L^q_x(I_1 \times I_2)} = \bigg( \int_{I_1} \bigg( \int_{I_2} |u(t, x)|^q dx \bigg)^{\frac{p}{q}} dt \bigg)^{\frac{1}{p}}$$
 where $I_1, I_2 \subset \RR$ and we make the obvious modification if $p, q = \infty$. Occasionally we write $L^p(\RR) = L^p$ when we can do so without causing confusion. This comment also applies to the other function spaces which appear throughout this paper. We let $C^{\infty}_0$ denote the space of smooth functions with compact support. If $X$ is a metric space and $I\subset \RR $ is an interval,  then  $C(I, X)$ denotes the set of continuous functions from $I$ into $X$. For $1<p<\infty$ and $s \in \RR$, we take $W^{s, p}(\RR)$ to be the usual Sobolev space defined using the norm
            $$ \| f \|_{W^{s, p} (\RR)} = \| \Lambda^s f \|_{L^p(\RR)}$$
 where $\widehat{\big(\Lambda^s f \big)} (\xi) = (1+ |\xi|^2)^{\frac{s}{2}} \widehat{f}(\xi)$ and $\widehat{f}$ denotes the Fourier transform of $f$. We also define the homogeneous variant $\dot{W}^{s, p}$ via the norm
            $$ \| f\|_{\dot{W}^{s, p}}  = \| D^s f \|_{L^p(\RR)}$$
 where $\widehat{D^s f}(\xi) = |\xi|^s \widehat{f}(\xi)$. In the special case $p=2$, we use the notation $H^s = W^{s, 2}$ and $\dot{H}^s = \dot{W}^{s, p}$. For an interval $I \subset \RR$ we define $W^{s, p}(I)$ by restricting elements of $W^{s, p}(\RR)$ to $I$. The restriction space $W^{s, p}(I)$ is a Banach space with norm
            $$ \| f \|_{W^{s, p}(I) }= \inf_{ g|_{I} = f} \| g\|_{W^{s, p}(\RR)}.$$
The homogeneous version $\dot{W}^{s, p}(I)$ is defined similarly.  Finally we let $I_R= [-R, R]$ and $\Omega_R = [-R, R] \times [-R, R]$ where $R>0$.\\

\section{Preliminaries}\label{preliminaries}

\begin{figure}
\setlength{\unitlength}{0.150mm}
\begin{picture}(522,435)(200,-480)
             \allinethickness{0.254mm}\special{sh 0.2}\path(260,-285)(440,-105)(620,-285)(440,-465)(260,-285) % Shade Polygon
        \allinethickness{0.254mm}\put(440,-480){\vector(0,1){435}} % Vector Line
        \allinethickness{0.254mm}\put(200,-285){\vector(1,0){520}} % Vector Line
        \allinethickness{0.254mm}\path(440,-285)(620,-105)\special{sh 1}\path(620,-105)(615,-108)(616,-109)(617,-110)(620,-105) % Plain Solid Arrow
        \allinethickness{0.254mm}\path(440,-285)(280,-445) % Plain Solid Line
        \allinethickness{0.254mm}\path(440,-285)(620,-465)\special{sh 1}\path(620,-465)(615,-462)(616,-461)(617,-460)(620,-465) % Plain Solid Arrow
        \allinethickness{0.254mm}\path(440,-285)(265,-110) % Plain Solid Line
        \put(705,-310){\shortstack{$x$}} % Plain Text
        \put(448,-61){\shortstack{$t$}} % Plain Text
        \put(623,-456){\shortstack{$\beta= x-t$}} % Plain Text
        \put(615,-126){\shortstack{$\alpha= x+t$}} % Plain Text
        \put(460,-186){\shortstack{$\Omega_R$}} % Plain Text
        \put(615,-310){\shortstack{$R$}} % Plain Text
        \put(225,-310){\shortstack{$-R$}} % Plain Text
         % Set color to black again (default font color)
\end{picture}
\caption{Null coordinates and the domain $\Omega_R$. }
\label{null coords pic}
\end{figure}
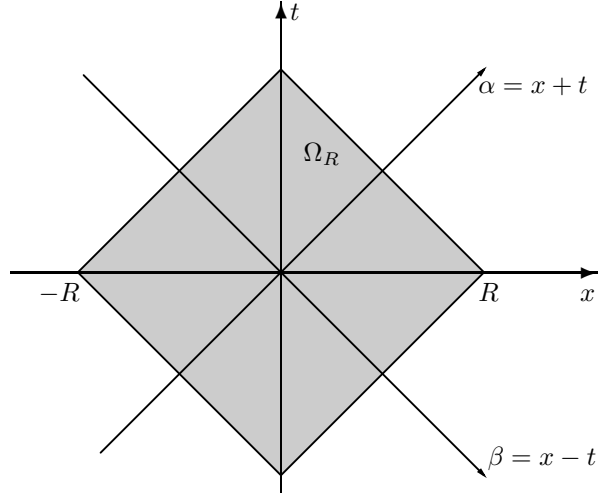

Define the spaces $Y_R$ and $X_R$ as the completion of $C^{\infty}$ using the norms
        $$ \| \psi \|_{Y_R} = \| \psi^* \|_{L^{\infty}_\alpha L^2_\beta(\Omega_R)} + \| \p_\alpha \psi^* \|_{L^1_\alpha L^2_\beta(\Omega_R)}$$
and
        $$ \| \phi \|_{X_R} = \| \phi^* \|_{L^{\infty}_\beta L^2_\alpha(\Omega_R)} + \| \p_\beta \phi^* \|_{L^1_\beta L^2_\alpha(\Omega_R)}$$
where $\psi(t, x)$ is a space time map into $\CC$ and we define $\psi^*(\alpha, \beta) = \psi\big(\frac{\alpha - \beta}{2} , \frac{\alpha + \beta}{2} \big)$. We refer to the coordinates $(\alpha, \beta)=(x+t, x-t)$ as null coordinates, see Figure \ref{null coords pic}. Note that if we compare the $Y_R$ and $X^{s, b}$ norms, it is easy to see what we gain by using null coordinates. Roughly speaking, for $X^{s, b}$ spaces we have $\frac{1}{2}$ a derivative in $L^2$ in the null direction, which fails to control $L^\infty$. In our norms however, we have a full derivative in $L^1$, which does control $L^\infty$, despite the fact that the norms $\| \cdot \|_{\dot{H}^{\frac{1}{2}}} $ and $\|\cdot\|_{\dot{W}^{1, 1}}$ have the same scaling.  \\

The $Y_R$ and $X_R$ norms are similar to those used in \cite{Machihara2010}, where they used norms of the form $\| \cdot\|_{L^2_\alpha L^\infty_\beta}$. In fact the norms $\| \cdot\|_{L^2_\alpha L^\infty_\beta}$ would suffice to give the $L^2$ case of Theorem \ref{global}. However using $L^2_\alpha L^\infty_\beta$ type spaces gives no control over derivatives in the null directions, which is required in the persistence of regularity argument in Section 5. Thus we need to use the slightly stronger $Y_R$, $X_R$ norms.\\

The first result we will need is the following energy type inequality.
\begin{proposition}\label{energy}
Assume $\psi$ is a solution to $ \p_t \psi + \p_x \psi = F$ with $\psi(0)= f$ and $f, F \in C^\infty$. Then
        $$ \| \psi \|_{Y_R} \les \|f\|_{L^2(I_{R})} + \| F^* \|_{L^1_\alpha L^2_\beta(\Omega_R)} .$$

Similarly, if $\phi$ solves $\p_t \phi - \p_x \phi =G$ with $\psi(0) = g$ and $g, G \in C^\infty$, then
        $$ \| \phi \|_{X_R} \les \|g\|_{L^2(I_{R})} + \| G^* \|_{L^1_\beta L^2_\alpha(\Omega_R)} .$$
\begin{proof}
We only prove the first inequality as the second is almost identical. Write the solution $\psi$ as
       $$ \psi(t, x) = f(x-t) + \int_0^t F(s , x-t+s) ds .$$
Then a simple change of variables gives
        $$\psi^*(\alpha, \beta) = f(\beta) + \frac{1}{2} \int_{\beta}^\alpha F^*(s, \beta) ds.$$
Therefore the proposition follows from the definition of $\| \cdot \|_{Y_R}$ together with Minkowski's inequality.

\end{proof}
\end{proposition}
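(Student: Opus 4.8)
The plan is to solve the transport equation explicitly by integrating along characteristics and then to read both pieces of the $Y_R$ norm directly off the resulting formula. Since $\p_t \psi + \p_x \psi = F$ propagates along the lines $x-t=\text{const}$, integrating in time gives
$$ \psi(t,x) = f(x-t) + \int_0^t F(s, x-t+s)\, ds . $$
First I would substitute this into the definition of $\psi^*$: along the characteristic through $(t,x)$ the coordinate $\beta = x-t$ is frozen while $\alpha$ runs from $\beta$ (at time zero) up to $x+t$, so the change of variables $u = 2s+\beta$ in the integral converts the formula into
$$ \psi^*(\alpha,\beta) = f(\beta) + \frac{1}{2}\int_\beta^\alpha F^*(u, \beta)\, du . $$

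From here the derivative term is immediate: by the fundamental theorem of calculus $\p_\alpha \psi^*(\alpha,\beta) = \frac{1}{2} F^*(\alpha,\beta)$, hence $\| \p_\alpha \psi^* \|_{L^1_\alpha L^2_\beta(\Omega_R)} = \frac{1}{2} \| F^* \|_{L^1_\alpha L^2_\beta(\Omega_R)}$. For the $L^\infty_\alpha L^2_\beta$ term I would fix $\alpha \in I_R$, take the $L^2_\beta$ norm over $I_R$, and split off the term $f(\beta)$ with the triangle inequality; since this contributes exactly $\|f\|_{L^2(I_R)}$ uniformly in $\alpha$, it remains to handle the integral. Because $\alpha,\beta \in [-R,R]$ the interval of integration is contained in $[-R,R]$, so $\big| \int_\beta^\alpha F^*(u,\beta)\, du \big| \les \int_{-R}^R |F^*(u,\beta)|\, du$, and Minkowski's integral inequality, used to interchange the $u$-integration with the $L^2_\beta$ norm, bounds this by $\| F^* \|_{L^1_\alpha L^2_\beta(\Omega_R)}$, again uniformly in $\alpha$. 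Adding the two contributions gives the stated inequality (in fact with constant one). The estimate for $\phi$ is obtained in the same way: now $\p_t\phi - \p_x\phi = G$ propagates along $x+t=\text{const}$, so the frozen characteristic coordinate is $\alpha$, one gets $\phi^*(\alpha,\beta) = g(\alpha) + \frac{1}{2}\int_\beta^\alpha G^*(\alpha,u)\, du$, and the roles of $\alpha$ and $\beta$ in the norms are interchanged accordingly.

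I do not expect a genuine obstacle: once the characteristic representation is written down the rest is bookkeeping, and the only analytic ingredient is Minkowski's integral inequality. The one place that needs a little care is the change of variables producing the $\frac{1}{2}\int_\beta^\alpha$ representation — keeping track of the limits and of the Jacobian factor $\frac{1}{2}$ — together with the observation that $\Omega_R = \{|\alpha| \les R,\ |\beta| \les R\}$ forces the $u$-integration to remain inside $[-R,R]$. This last point is precisely where the finite speed of propagation of the transport equation is being used, and it is what lets the $Y_R$ norm on the left and the data norm on the right be taken only over $\Omega_R$ and $I_R$ respectively.
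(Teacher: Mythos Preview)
Your proposal is correct and follows exactly the same approach as the paper: write the characteristic formula, convert it to the null-coordinate representation $\psi^*(\alpha,\beta)=f(\beta)+\tfrac{1}{2}\int_\beta^\alpha F^*(u,\beta)\,du$, and then read off both pieces of the $Y_R$ norm using Minkowski's inequality. The paper simply states that last step in one sentence, whereas you have spelled out the $\p_\alpha$ term and the $L^\infty_\alpha L^2_\beta$ term separately.
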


The energy type inequality gains a full derivative in the relevant null direction, this gain of regularity will prove crucial and is a substitute for the null form estimates of the form (\ref{null}) used in \cite{Selberg2010b}.

We will also need the following  estimate which is essentially just the embedding $W^{1, 1} \subset L^\infty$.
\begin{lemma}\label{est}
For any $R>0$ we have
        $$ \|\psi^* \|_{L^2_\beta L^\infty_\alpha(\Omega_R)} \les \|\psi\|_{Y_R}$$
and
         $$ \|\phi^* \|_{L^2_\alpha L^\infty_\beta(\Omega_R)} \les \|\phi\|_{X_R}.$$
\begin{proof}
Since $C^\infty$ is dense in $Y_R$ and $X_R$, it suffices to consider the case $\psi, \phi \in C^\infty$. Then for every $\alpha, \beta \in \Omega_R$
        $$ \psi^*(\alpha, \beta) = \int_0^\alpha \p_\alpha \psi^*(\gamma, \beta)d\gamma + \psi^*(0, \beta).$$
Taking the supremum over $\alpha$ followed by the $L^2$ norm in $\beta$ gives the inequality for $\psi^*$. The inequality for $\phi^*$ is similar.

\end{proof}
\end{lemma}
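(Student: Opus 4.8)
The plan is to reduce to smooth functions by density (recall $C^\infty$ is dense in $Y_R$ and $X_R$ by definition of these spaces) and then to run the one-dimensional embedding $W^{1,1}\subset L^\infty$ in the $\alpha$ variable alone, keeping $\beta$ frozen. Concretely, for $\psi \in C^\infty$ and each fixed $\beta$ with $(\alpha,\beta)\in\Omega_R$, the fundamental theorem of calculus gives
$$ \psi^*(\alpha,\beta) = \psi^*(0,\beta) + \int_0^\alpha \p_\alpha \psi^*(\gamma, \beta)\, d\gamma, $$
which is legitimate because, in the $(\alpha,\beta)$ coordinates, $\Omega_R$ is the coordinate square $I_R \times I_R$, so $0$ always lies in the $\alpha$-section over any admissible $\beta$. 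Taking absolute values and the supremum over $\alpha \in I_R$ yields the pointwise-in-$\beta$ bound
$$ \sup_{\alpha\in I_R} |\psi^*(\alpha,\beta)| \les |\psi^*(0,\beta)| + \int_{I_R} |\p_\alpha \psi^*(\gamma,\beta)|\, d\gamma. $$

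Next I would take the $L^2$ norm in $\beta$ over $I_R$ of this inequality. For the first term on the right this is harmless, and for the second I would apply Minkowski's integral inequality to move the $L^2_\beta$ norm inside the $\gamma$-integral, obtaining
$$ \| \psi^* \|_{L^2_\beta L^\infty_\alpha(\Omega_R)} \les \| \psi^*(0, \cdot) \|_{L^2_\beta(I_R)} + \| \p_\alpha \psi^* \|_{L^1_\alpha L^2_\beta(\Omega_R)}. $$
To conclude, I observe that $\| \psi^*(0,\cdot) \|_{L^2_\beta(I_R)} \les \| \psi^* \|_{L^\infty_\alpha L^2_\beta(\Omega_R)}$, since evaluation at the fixed value $\alpha = 0$ is controlled by the essential supremum over $\alpha$; adding the two terms gives precisely $\|\psi\|_{Y_R}$. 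The estimate for $\phi^*$ follows from the same argument with the roles of $\alpha$ and $\beta$ interchanged, anchoring the fundamental theorem of calculus at $\beta = 0$ and reading off $\|\cdot\|_{X_R}$ from its definition.

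I do not expect any genuine obstacle here: the statement is essentially the scaling-critical Sobolev embedding $\dot W^{1,1}\subset L^\infty$ applied one slice at a time. The only points deserving a moment's care are checking that $\Omega_R$ is a genuine square in the null variables (so the anchor $\alpha=0$, respectively $\beta=0$, sits in every section), and the use of Minkowski's inequality in the form $\big\| \int_{I_R} |h(\gamma,\cdot)|\, d\gamma \big\|_{L^2_\beta} \les \int_{I_R} \| h(\gamma,\cdot) \|_{L^2_\beta}\, d\gamma$; both are routine, and the density reduction disposes of any concern about differentiating $\psi^*$.
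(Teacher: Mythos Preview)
Your proof is correct and follows exactly the same approach as the paper's own proof: density reduction, the fundamental theorem of calculus in $\alpha$ anchored at $\alpha=0$, then supremum in $\alpha$ followed by $L^2$ in $\beta$. You have simply written out the details (Minkowski's inequality, the bound $\|\psi^*(0,\cdot)\|_{L^2_\beta} \les \|\psi^*\|_{L^\infty_\alpha L^2_\beta}$) that the paper leaves implicit.
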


\begin{corollary}\label{emb}
Let $0<T< R$. Then we have the continuous embeddings $Y_R, X_R \subset C\big(I_T, L^2(I_{R-T})\big)$.
\begin{proof}
Write $\psi(t, x) = \psi^*(x+t, x-t)$. Since $(t,x) \in [-T, T] \times [-R+T, R-T]$ and $0<T<R$ we have $|t+x|\les R$ and $|x-t|\les R$. Therefore
        \begin{align*}
            \| \psi \|_{L^\infty_t L^2_x(I_T \times I_{R-T}) }&= \| \psi^*(x+t, x-t)\|_{L^\infty_t L^2_x(I_T \times I_{R-T})} \\
                                &\les \| \psi^*(\alpha, \beta) \|_{L^2_\beta L^\infty_\alpha(\Omega_R)}\end{align*}
and so the previous lemma gives
        \begin{equation}\label{emb eqn1} \| \psi \|_{L^\infty_t L^2_x(I_T \times I_{R-T}) } \les \| \psi \|_{Y_R}. \end{equation}
The $L^2$ continuity of $\psi(t)$ then follows from the uniform bound  (\ref{emb eqn1}) together with the density of $C^\infty$  in $Y_R$. The embedding $X_R \subset C\big( I_T , L^2(I_{R-T}) \big)$ follows from a similar application of Lemma \ref{est}.
\end{proof}
\end{corollary}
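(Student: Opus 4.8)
The plan is to obtain the embedding as a consequence of Lemma~\ref{est} together with the density of $C^\infty$ in $Y_R$ and $X_R$. Fix $0<T<R$ and first treat a smooth $\psi$: for $(t,x) \in I_T \times I_{R-T}$ the null coordinates obey $|x+t| \les R$ and $|x-t| \les R$, so $(x+t, x-t) \in \Omega_R$ and $\psi(t,x) = \psi^*(x+t,x-t)$. Taking the $L^2_x$ norm over $I_{R-T}$ and then the supremum over $t \in I_T$, a change of variables bounds $\|\psi\|_{L^\infty_t L^2_x(I_T \times I_{R-T})}$ by $\|\psi^*\|_{L^2_\beta L^\infty_\alpha(\Omega_R)}$, whence Lemma~\ref{est} gives the uniform estimate $\|\psi\|_{L^\infty_t L^2_x(I_T \times I_{R-T})} \les \|\psi\|_{Y_R}$.

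Next I would observe that for smooth $\psi$ the orbit $t \mapsto \psi(t,\cdot)$ is clearly a continuous $L^2(I_{R-T})$-valued curve, so $\psi \in C\big(I_T, L^2(I_{R-T})\big)$; the uniform bound just established shows that the linear inclusion $C^\infty \to C\big(I_T, L^2(I_{R-T})\big)$ is bounded when the domain carries the $Y_R$ norm. Since $C^\infty$ is dense in $Y_R$ by definition, and $C\big(I_T, L^2(I_{R-T})\big)$ is a Banach space under the sup-in-$t$ norm, this inclusion extends uniquely to a bounded linear map $Y_R \to C\big(I_T, L^2(I_{R-T})\big)$, and the uniform estimate forces the extension to coincide, as an $L^\infty_t L^2_x$ map, with the given element of $Y_R$. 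This yields the continuous embedding $Y_R \subset C\big(I_T, L^2(I_{R-T})\big)$; the case of $X_R$ is identical after interchanging the roles of $\alpha$ and $\beta$ and invoking the second inequality of Lemma~\ref{est}, namely $\|\phi^*\|_{L^2_\alpha L^\infty_\beta(\Omega_R)} \les \|\phi\|_{X_R}$.

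The computation producing the uniform bound is routine, so the only point needing care is the passage from a uniform $L^\infty_t L^2_x$ estimate to genuine continuity in time. This is handled entirely by soft functional analysis — density of smooth functions in $Y_R$ (respectively $X_R$) and completeness of $C\big(I_T, L^2(I_{R-T})\big)$ — and requires no further estimate, so I do not expect a real obstacle here.
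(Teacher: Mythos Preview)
Your proposal is correct and follows essentially the same approach as the paper: both establish the uniform bound $\|\psi\|_{L^\infty_t L^2_x(I_T\times I_{R-T})}\les\|\psi\|_{Y_R}$ via the change to null coordinates and Lemma~\ref{est}, and then pass to continuity by density of $C^\infty$ in $Y_R$. Your write-up is slightly more explicit about the functional-analytic extension step (completeness of $C(I_T,L^2(I_{R-T}))$ and agreement of the extension with the given element), but the argument is the same.
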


\section{Local Existence}\label{local proof}

We will deduce Theorem \ref{local} from the following localised version via translation invariance.

\begin{theorem}\label{localised}
Let $0<R<\frac{1}{16|m|}$. There exists $\epsilon>0$ depending only on $\lambda$ such that if $ f, g \in L^2(I_R)$ satisfy
		\begin{equation}\label{localised cond} \|f\|_{L^2(I_R)} + \|g\|_{L^2(I_R)} < \epsilon,\end{equation}
then there exists a unique solution $(\psi, \phi) \in Y_R \times X_R$ to (\ref{Dirac}) such that
        $$ \| \psi\|_{Y_R} + \| \phi \|_{X_R} < 2 \epsilon.$$
Moreover the solution map, mapping initial data satisfying the condition (\ref{localised cond}) to the solution $(\psi, \phi) \in Y_R \times X_R$, is Lipschitz continuous.

\begin{proof}

Let $$\mathcal{X}_R = \big\{ (\psi, \phi) \in Y_R \times X_R \big| \,\, \| \psi\|_{Y_R} + \|\phi\|_{X_R} < 2\epsilon\,\}$$ where  $\epsilon>0$ is a small constant to be fixed later. Define $N_R:\mathcal{X}_R \rightarrow \mathcal{X}_R$ by $N_R(\psi, \phi) = (u, v)$ where

    \begin{align*}
        u(t, x) &= f(x-t) - \int_0^t \big(i m \phi + i 2\lambda |\phi|^2\psi\big)(s, x-t+s) ds\\
        v(t, x) &= g(x+t) - \int_0^t \big( i m \psi + i 2\lambda |\psi|^2 \phi \big) (s, x+t - s) ds. \end{align*}

By Proposition \ref{energy} we have
        $$ \| u \|_{Y_R} + \|v\|_{X_R} \les  \epsilon +  \| m \phi^* \|_{L^1_\alpha L^2_\beta(\Omega_R)}  + \| m \psi^* \|_{L^1_\beta L^2_\alpha(\Omega_R)}  +  2 \big\|  \lambda |\phi^*|^2 \psi^* \big\|_{L^1_\alpha L^2_\beta(\Omega_R)}+  2 \big\|  \lambda |\psi^*|^2\phi^*\big\|_{L^1_\beta L^2_\alpha(\Omega_R)} .$$
An application of H\"{o}lder's inequality shows that
            \begin{align*} \| \phi^* \|_{L^1_\alpha L^2_\beta(\Omega_R)} + \| \psi^* \|_{L^1_\beta L^2_\alpha(\Omega_R)}
                 &\les 2R \| \phi^*\|_{L^\infty_\beta L^2_\alpha(\Omega_R)} + 2R \| \psi^* \|_{L^\infty_\alpha L^2_\beta(\Omega_R)}\\
                 &\les 4R \epsilon. \end{align*}
The nonlinear terms can be controlled by H\"{o}lder's inequality followed by Lemma \ref{est}, for instance
        \begin{align*} \big\|  |\phi^*|^2 \psi^* \big\|_{L^1_\alpha L^2_\beta(\Omega_R)}
                    &\les \| \phi^* \|_{L^2_\alpha L^\infty_\beta(\Omega_R)}^2 \|\psi^*\|_{L^\infty_\alpha L^2_\beta(\Omega_R)} \\
                 &\les \|\phi\|_{X_R}^2 \|\psi\|_{Y_R} \end{align*}
the remaining term is similar. Combining these estimates we obtain
    $$ \| u \|_{Y_R} + \|v\|_{X_R} \les \epsilon + \big( 4 R |m| + 16 |\lambda|  \epsilon^2 \big) \epsilon.$$
Therefore provided $0<R<\frac{1}{16  |m|}$ and $\epsilon$ is sufficiently small (depending only on $\lambda$), we see that $N_R$ is well defined. To show $N_R$ is a contraction mapping follows by a similar application of Proposition \ref{energy}, thus we obtain existence. Continuous dependence on initial data in $\mathcal{X}_R$ is a simple corollary of the estimates used to deduce that $N_R$ is a contraction mapping.

It only remains to prove uniqueness.  Assume we have a solution $(\psi', \phi') \in Y_R \times X_R$ with initial data $(f, g)\in L^2(I_R)$ satisfying $(\ref{localised cond})$ and let $(\psi, \phi)$ denote the solution constructed by the above fixed point argument with the same initial data $(f, g)$. By choosing $R'\les R$ sufficiently small we have
                $$ \| \psi' \|_{Y_{R'}} + \| \phi' \|_{X_{R'}} < 2 \epsilon$$
and so $(\psi', \phi') \in \mathcal{X}_{R'}$. Note that we also have $(\psi, \phi) \in \mathcal{X}_{R} \subset \mathcal{X}_{R'}$. Thus, as there is a unique fixed point in $\mathcal{X}_{R'}$, we deduce that
            $ (\psi', \phi') = (\psi, \phi)$
on $\Omega_{R'}$. Define
            $$R_{max} = \sup \big\{ r \les R\, \big| \, \| \psi' \|_{Y_{r}} + \| \phi' \|_{X_{r}} < 2 \epsilon \big\}$$
and suppose $R_{max}<R$. Then by the above argument we have $ (\psi', \phi') = (\psi, \phi)$ on $\Omega_r$ for every $r<R_{max}$ and hence
           \begin{align*} \| \psi' \|_{Y_{R_{max}}} + \| \phi' \|_{X_{R_{max}}} &= \| \psi\|_{Y_{R_{max}}} + \|\phi\|_{X_{R_{max}}} \\
                                    &\les \| \psi\|_{Y_R} + \|\phi\|_{X_R}  < 2\epsilon. \end{align*}
Consequently $(\psi', \phi') \in \mathcal{X}_r$ for some $r>R_{max}$, contradicting the definition of $R_{max}$. Therefore we must have $R_{max}=R$ and so our solutions agree on $\Omega_R$. Finally, we note that by uniqueness, the continuous dependence on initial data extends from $\mathcal{X}_R$ to $Y_R \times X_R$.

\end{proof}
\end{theorem}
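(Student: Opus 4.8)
The plan is to produce the solution on $\Omega_R$ by a Picard iteration in the Banach space $Y_R \times X_R$, restricted to the closed ball $\mathcal{X}_R = \{(\psi,\phi) : \|\psi\|_{Y_R} + \|\phi\|_{X_R} \les 2\epsilon\}$. First I would pass to the Duhamel formulation: since $\psi$ is transported along the null lines $\beta = x-t = \mathrm{const}$ and $\phi$ along $\alpha = x+t = \mathrm{const}$, the natural fixed-point map is $N_R(\psi,\phi) = (u,v)$, where $u$ solves $\p_t u + \p_x u = -im\phi - 2i\lambda|\phi|^2\psi$ with $u(0)=f$, and $v$ solves $\p_t v - \p_x v = -im\psi - 2i\lambda|\psi|^2\phi$ with $v(0)=g$; a fixed point of $N_R$ is exactly a solution of (\ref{Dirac}) on $\Omega_R$.

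The core estimate is that $N_R$ maps $\mathcal{X}_R$ into itself. By Proposition \ref{energy}, $\|u\|_{Y_R} + \|v\|_{X_R}$ is bounded by $\|f\|_{L^2(I_R)} + \|g\|_{L^2(I_R)}$ together with the mixed norms of the forcing terms, namely $\|\phi^*\|_{L^1_\alpha L^2_\beta(\Omega_R)}$ and $\||\phi^*|^2\psi^*\|_{L^1_\alpha L^2_\beta(\Omega_R)}$ (weighted by $|m|$ and $2|\lambda|$) in the $u$-equation, and the symmetric quantities in the $v$-equation. For the linear terms, H\"older's inequality in $\alpha$ over an interval of length $2R$ converts $L^1_\alpha$ into $L^\infty_\alpha$ at the cost of a factor $2R$, so they contribute $\lesa R|m|\epsilon$. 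For the cubic term the point is to place each field in the slot dictated by its equation: in the $u$-equation one integrates in $\alpha$, so I put the two factors $\phi^*$ into $L^2_\alpha L^\infty_\beta$ — a norm bounded by $\|\phi\|_{X_R}$ by Lemma \ref{est} — and the single factor $\psi^*$ into $L^\infty_\alpha L^2_\beta$, which is part of the $Y_R$ norm, yielding $\lesa |\lambda|\,\|\phi\|_{X_R}^2\|\psi\|_{Y_R} \lesa |\lambda|\epsilon^3$; the $v$-equation is handled by the mirror-image estimate. Summing, $\|u\|_{Y_R} + \|v\|_{X_R} \lesa \epsilon + (R|m| + |\lambda|\epsilon^2)\epsilon$, so the hypothesis $R < \frac{1}{16|m|}$ and $\epsilon$ small (depending only on $\lambda$) make the right side $< 2\epsilon$. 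The contraction estimate for $N_R(\psi_1,\phi_1) - N_R(\psi_2,\phi_2)$ is identical once $|\phi_1|^2\psi_1 - |\phi_2|^2\psi_2$ is split into a sum of terms each carrying one difference factor: the difference of the $Y_R$-fields goes into $L^\infty_\alpha L^2_\beta$ and the $X_R$-fields into $L^2_\alpha L^\infty_\beta$ (and symmetrically in the other equation), so the same smallness gives a Lipschitz constant $< 1$, and the Banach fixed point theorem produces the unique $(\psi,\phi) \in \mathcal{X}_R$. Retaining the initial-data difference through this last computation gives the Lipschitz dependence on $(f,g)$.

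It remains to upgrade uniqueness from $\mathcal{X}_R$ to all of $Y_R \times X_R$. Given a solution $(\psi',\phi') \in Y_R \times X_R$, I would first choose $R' \les R$ small enough that $\|\psi'\|_{Y_{R'}} + \|\phi'\|_{X_{R'}} < 2\epsilon$, so that $(\psi',\phi') \in \mathcal{X}_{R'}$ and hence, by uniqueness of the fixed point on the smaller domain, agrees with the iteration solution $(\psi,\phi)$ on $\Omega_{R'}$; then a continuation in the radius — let $R_{max}$ be the supremum of $r \les R$ for which $(\psi',\phi')$ stays in the $2\epsilon$-ball, and use the monotone bound $\|\psi'\|_{Y_{R_{max}}} + \|\phi'\|_{X_{R_{max}}} \les \|\psi'\|_{Y_R} + \|\phi'\|_{X_R} < 2\epsilon$ to rule out $R_{max} < R$ — propagates the agreement to $\Omega_R$, after which uniqueness also extends the Lipschitz dependence to $Y_R \times X_R$. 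I expect the genuine obstacle to be the cubic estimate, which closes \emph{only} because the nonlinearity has the null form $|\phi|^2\psi$ with two copies of the oppositely transported field: that is exactly what lets Lemma \ref{est} convert the full null derivative supplied by Proposition \ref{energy} into $L^\infty$ control in the correct variable. The same manipulation applied to $|\psi|^2\psi$ would force three $Y_R$-fields into a single $L^1_\alpha L^2_\beta$ norm, and no H\"older splitting recovers it.
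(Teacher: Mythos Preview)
Your argument is essentially identical to the paper's: the same Duhamel map, the same use of Proposition~\ref{energy} and Lemma~\ref{est}, the same H\"older splitting of the cubic term, and the same $R_{\max}$ continuation for unconditional uniqueness.

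There is one slip in the uniqueness step. You write that the contradiction at $R_{\max}<R$ follows from the monotone bound
\[
\|\psi'\|_{Y_{R_{\max}}} + \|\phi'\|_{X_{R_{\max}}} \les \|\psi'\|_{Y_R} + \|\phi'\|_{X_R} < 2\epsilon,
\]
but the last inequality is not available: $(\psi',\phi')$ is an \emph{arbitrary} element of $Y_R\times X_R$, and if you already knew it lay in the $2\epsilon$-ball at radius $R$ there would be nothing to prove. The correct move (and what the paper does) is to use what the continuation has already established: for every $r<R_{\max}$ one has $(\psi',\phi')=(\psi,\phi)$ on $\Omega_r$, hence $\|\psi'\|_{Y_{R_{\max}}} + \|\phi'\|_{X_{R_{\max}}} = \|\psi\|_{Y_{R_{\max}}} + \|\phi\|_{X_{R_{\max}}} \les \|\psi\|_{Y_R} + \|\phi\|_{X_R} < 2\epsilon$, where the final bound comes from the \emph{iteration} solution, which lives in $\mathcal{X}_R$ by construction. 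With this correction your argument goes through.
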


We can now prove Theorem \ref{local} by using translation invariance and uniqueness.

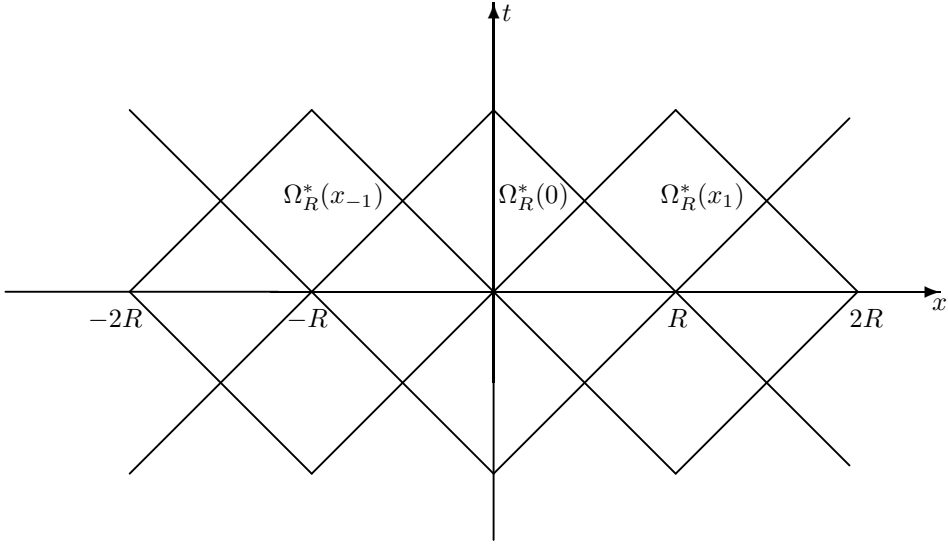
\begin{figure}
\setlength{\unitlength}{0.220mm}
\begin{picture}(577,326)(100,-415)
        \allinethickness{0.254mm}\put(260,-265){\vector(1,0){405}} % Vector Line
        \allinethickness{0.254mm}\put(395,-320){\vector(0,1){230}} % Vector Line
        \allinethickness{0.254mm}\path(395,-155)(505,-265) % Plain Solid Line
        \allinethickness{0.254mm}\path(505,-265)(395,-375) % Plain Solid Line
        \allinethickness{0.254mm}\path(395,-375)(285,-265) % Plain Solid Line
        \allinethickness{0.254mm}\path(285,-265)(395,-155) % Plain Solid Line
        \allinethickness{0.254mm}\path(395,-265)(505,-155) % Plain Solid Line
        \allinethickness{0.254mm}\path(505,-155)(615,-265) % Plain Solid Line
        \allinethickness{0.254mm}\path(615,-265)(505,-375) % Plain Solid Line
        \allinethickness{0.254mm}\path(395,-265)(505,-375) % Plain Solid Line
        \allinethickness{0.254mm}\path(505,-265)(610,-160) % Plain Solid Line
        \allinethickness{0.254mm}\path(505,-265)(610,-370) % Plain Solid Line
        \allinethickness{0.254mm}\path(395,-265)(285,-155) % Plain Solid Line
        \allinethickness{0.254mm}\path(285,-155)(175,-265) % Plain Solid Line
        \allinethickness{0.254mm}\path(175,-265)(285,-375) % Plain Solid Line
        \allinethickness{0.254mm}\path(395,-265)(285,-375) % Plain Solid Line
        \allinethickness{0.254mm}\path(285,-265)(175,-155) % Plain Solid Line
        \allinethickness{0.254mm}\path(285,-265)(175,-375) % Plain Solid Line
        \allinethickness{0.254mm}\path(395,-285)(395,-415) % Plain Solid Line
        \allinethickness{0.254mm}\path(265,-265)(100,-265) % Plain Solid Line
        \put(660,-276){\shortstack{$x$}} % Plain Text
        \put(400,-101){\shortstack{$t$}} % Plain Text
        \put(398,-211){\shortstack{$\Omega^*_R(0)$}} % Plain Text
        \put(496,-211){\shortstack{$\Omega^*_R(x_1)$}} % Plain Text
        \put(268,-211){\shortstack{$\Omega^*_R(x_{-1})$}} % Plain Text
        \put(500,-286){\shortstack{$R$}} % Plain Text
        \put(270,-286){\shortstack{$-R$}} % Plain Text
        \put(610,-286){\shortstack{$2R$}} % Plain Text
        \put(150,-286){\shortstack{$-2R$}} % Plain Text
         % Set color to black again (default font color)
\end{picture}
\caption{The regions $\Omega^*_R(x_j)$ in the proof of Theorem \ref{local}.}
\label{proof of local exist pic}
\end{figure}
\begin{proof}[Proof of Theorem \ref{local}]

 Assume $f, g \in L^2$ and let $I_R(x) = [x-R, x+R]$. Choose $R$ sufficiently small so that
            \begin{equation}\label{local eqn1}
                    \sup_{j \in \ZZ}\Big(  \| f\|_{L^2(I_R(jR)) } + \| g\|_{L^2(I_R(jR))}\Big) < \epsilon
            \end{equation}
where $0<R<\frac{1}{16|m|}$ and $\epsilon>0$ is the constant in Theorem \ref{localised}. By Theorem \ref{localised} and spatial invariance we then get a solution $(\psi_j, \phi_j) \in Y_{R, x_j} \times X_{R, x_j}$, where $Y_{R, x_j}$ denotes the $Y_R$ space centered at $x_j=j R$ with radius $R$. Using uniqueness we can glue these solutions together to get a solution $(\psi, \phi)$ on $\bigcup_{j \in \ZZ} \Omega_{R}^*(x_j)$ where
				$$ \Omega^*_R(x) = \big\{ (t, y) \, \big| \, |t + y-x|\les R, \,\, |t-y+x|\les R \,\big\}.$$
Letting $T=\frac{R}{2}$ and noting that $|x_j - x_{j+1}|=R$ we have $[-T, T] \times \RR \subset \bigcup_{j \in \ZZ} \Omega_{R}^*(x_j)$. Thus it only remains to prove that, firstly, $(\psi, \phi) \in C([-T, T], L^2)$ and secondly, that the solution map is continuous.

To this end assume $(f_k, g_k)$ converges to $(f, g)$ in $L^2$. By choosing $N>0$ sufficiently large we can ensure that $(f_k, g_k)$ satisfies (\ref{local eqn1}) for every $k\g N$ and $j\in \ZZ$. Using Theorem \ref{localised} and repeating the above argument we then get a solution $(\psi_k, \phi_k)$ on $[-T, T]\times \RR$. Moreover, the Lipschitz continuity of the localised solution map together with the embedding of Corollary \ref{emb} gives for every $ |t|<T$
            $$ \int_{|x-x_j|\les T } |\psi(t) - \psi_k(t)|^2 + |\phi(t) - \phi_k(t)|^2 dx \lesa \int_{|x-x_j| \les 2T} |f - f_k|^2 + |g - g_k|^2 dx. $$
Summing these inequalities over $j \in \ZZ$ we obtain
                $$ \| \psi - \psi_k\|_{L^\infty_t L^2_x} + \| \phi - \phi_k \|_{L^\infty_t L^2_x} \lesa \| f -f_k \|_{L^2} + \| g - g_k \|_{L^2}$$
and so the solution map is continuous. It is also now easy to see that $(\psi, \phi) \in C([-T, T], L^2)$.

\end{proof}

\section{Global Existence}\label{global proof}

We start by showing global existence forward in time, existence backwards in time will then follow by a symmetry argument. Suppose we tried to iterate forwards the local in time result of Theorem \ref{local}. Then we would obtain a sequence of strictly increasings times $T_0<T_1<...$ and  a solution on $[0, T_j]$, where the size of each $T_j$ would depend only on how small we needed to make $R$ before
				$$ \sup_{y\in\RR} \int_{|x-y|<R} |\psi(T_{j-1}, x)|^2 + |\phi(T_{j-1}, x)|^2 dx < \epsilon.$$
Thus, roughly speaking, provided we can ensure $R$ does not shrink to zero, we would obtain global existence. Note that the usual conservation of charge property is not sufficient, as it does not prevent the charge from concentrating at  a point. Instead we need to make use of the structure of the equation (\ref{Dirac}) via an argument similar to that of Delgado \cite{Delgado1978}.

\begin{proposition}\label{decomp}
Let $T>0$ and $2\les p \les \infty$. Assume $(\psi, \phi)\in C^\infty$ is a solution to (\ref{Dirac}) with initial data $f, g \in C^\infty_0$. Then there exists a decomposition
            $$(\psi, \phi) = (\psi_L, \phi_L)  + ( \psi_N, \phi_N)$$
such that
            $$ |\psi_L(t, x)| = |f(x-t)|, \qquad |\phi_L(t, x) | = |g(x+t)|,$$
and for every $0\les t \les T$,
            $$ \|\psi_N(t)\|_{L^p_x} + \|\phi_N(t)\|_{L^p_x} \lesa_{m, T} \| f\|_{L^2} +  \|g\|_{L^2}.$$
\begin{proof}
Assume $(f, g) \in C^\infty_0$ and let $\psi$, $\phi$ denote the corresponding (smooth) solutions to $(\ref{Dirac})$. Let $(\psi_N, \phi_N)$ be the solution to
                \begin{align*}
                        \p_t \psi_N + \p_x \psi_N  &= -i m \phi - i2 \lambda |\phi|^2 \psi_N \\
                         \p_t \phi_N - \p_x \phi_N  &= -i m \psi - i2 \lambda |\psi|^2 \phi_N
                \end{align*}
with $\psi_N(0) = \phi_N(0) = 0$ and let $(\psi_L, \phi_L)$ be the solution to
\begin{align*}
                        \p_t \psi_L + \p_x \psi_L  &=  - i 2\lambda |\phi|^2 \psi_L \\
                         \p_t \phi_L - \p_x \phi_L  &= - i 2\lambda |\psi|^2 \phi_L
                \end{align*}
with initial data $\psi_L(0) = f$ and $\phi_L(0) = g$. Note that by uniqueness of smooth solutions we have $(\psi, \phi)  = (\psi_L, \phi_L)  + (\psi_N, \phi_N)$. A computation shows that
                \begin{align*} \p_t |\psi_N|^2 + \p_x |\psi_N|^2 &= 2 m \Im( \phi \overline{\psi}_N) \\
                \p_t |\phi_N|^2 - \p_x |\phi_N|^2 &= 2 m \Im( \psi \overline{\phi}_N)
                \end{align*}
and
                \begin{align*} \p_t |\psi_L|^2 + \p_x |\psi_L|^2 &= 0\\
                \p_t |\phi_L|^2 - \p_x |\phi_L|^2 &= 0
                \end{align*}
where $\Im(z)$ denotes the imaginary part of $z \in \CC$. Thus we can write the solutions $\psi_L$ and $\psi_N$ as $ |\psi_L| = |f(x-t)|$ and
						\begin{equation}\label{mass1}  |\psi_N(t, x)|^2 = 2 m \int_0^t \Im(\phi \overline{\psi}_N )(s, x-t+s) ds. \end{equation}
Since $\phi = \phi_L+ \phi_N$ and $|\phi_L(t, x)| = |g(x+t)|$ we have
                \begin{align*}
                    \Big| \int_0^t \Im(\overline{\phi} \psi_N )(s, x-t+s) ds \Big|&\lesa \int_0^t |\phi(s, x-t+s)|^2 ds + \int_0^t |\psi_N(s, x-t+s) |^2ds \\
                                                                        &\lesa \int_0^t |g(2s + x-t)|^2 ds + \int_0^t |\phi_N(s, x-t+s) |^2 + |\psi_N(s, x-t+s) |^2 ds \\
                                                                        &\lesa \| g \|_{L^2}^2 + \int_0^t \|\phi_N(s) \|_{L^\infty_x}^2+ \|\psi_N(s)\|_{L^\infty_x}^2 ds.\end{align*}
Taking the $L_x^{\infty}$ norm of both sides of (\ref{mass1}) we obtain
                $$ \| \psi_N(t) \|_{L^\infty_x}^2 \lesa_m \| g\|_{L^2}^2 + \int_0^t \| \psi_N(s) \|_{L^\infty_x}^2 + \| \phi_N(s) \|_{L^\infty_x}^2 ds. $$
A similar argument gives
		$$\|\phi_N(t) \|_{L^\infty_x}^2 \lesa_m  \|f\|_{L^2}^2  + \int_0^t \| \psi_N(s)\|^2_{L^\infty_x} + \| \phi_N(s) \|_{L^\infty_x}^2 ds.$$
Therefore using Gronwall's inequality we see that for every $0 \les t \les T$ we have 			
			$$\|\psi_N(t) \|_{L^\infty_x} + \| \phi_N(t) \|_{L^\infty_x} \lesa_{m, T}  \|f\|_{L^2} +  \|g\|_{L^2}. $$
The finiteness of the $L^2$ norm follows by using conservation of charge
        \begin{align*}
            \| \psi_N(t) \|_{L^2_x} + \| \phi_N(t) \|_{L^2_x} &\les \| \psi(t) \|_{L^2_x} + \| \phi(t)\|_{L^2_x} + \| \psi_L(t)\|_{L^2_x} + \| \phi_L(t)\|_{L^2_x} \\
            &\lesa \| f\|_{L^2_x} + \| g\|_{L^2_x} .\end{align*}
Thus result follows by interpolation.
\end{proof}
\end{proposition}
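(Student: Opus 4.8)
The plan is to construct the splitting by hand, so that the ``linear'' piece carries only the part of the nonlinearity that preserves the modulus of the solution. Concretely, since $(\psi,\phi)$ is a given smooth solution, I would let $(\psi_N,\phi_N)$ solve the linear transport system
\[
  \p_t \psi_N + \p_x \psi_N = -im\phi - i2\lambda|\phi|^2\psi_N, \qquad
  \p_t \phi_N - \p_x \phi_N = -im\psi - i2\lambda|\psi|^2\phi_N
\]
with zero data, and let $(\psi_L,\phi_L)$ solve the same system with the mass terms $-im\phi$, $-im\psi$ deleted and data $(f,g)$. These are linear equations with smooth coefficients, so smooth solutions exist on $[0,T]$; adding the two systems recovers (\ref{Dirac}), and uniqueness of smooth solutions gives $(\psi,\phi)=(\psi_L,\phi_L)+(\psi_N,\phi_N)$. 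Note it is essential that the full coefficients $|\phi|^2$, $|\psi|^2$ (not $|\phi_L|^2$, $|\psi_L|^2$) appear in both subsystems, since otherwise the sum would not solve the original equation; and the mass terms must sit in the $N$-part, since $-im\phi$ would spoil the conservation identity for $|\psi_L|^2$ used below.

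Next I would exploit that the cubic coefficient $i2\lambda|\phi|^2$ is purely imaginary: pairing the $\psi_L$ equation with $\overline{\psi_L}$ and taking real parts annihilates the nonlinear term, so $\p_t|\psi_L|^2+\p_x|\psi_L|^2=0$ and hence $|\psi_L(t,x)|=|f(x-t)|$, and symmetrically $|\phi_L(t,x)|=|g(x+t)|$. The same computation for $\psi_N$ retains a genuine source, $\p_t|\psi_N|^2+\p_x|\psi_N|^2 = 2m\,\Im(\phi\overline{\psi_N})$, which I would integrate along the characteristic $s\mapsto x-t+s$ to obtain $|\psi_N(t,x)|^2 = 2m\int_0^t \Im(\phi\overline{\psi_N})(s,x-t+s)\,ds$, and likewise for $\phi_N$.

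Then comes the estimate. I would bound $|\Im(\phi\overline{\psi_N})|\lesa|\phi|^2+|\psi_N|^2$, split $|\phi|^2\lesa|\phi_L|^2+|\phi_N|^2 = |g(x+t)|^2+|\phi_N|^2$, and observe that $\int_0^t|g(2s+x-t)|^2\,ds\lesa\|g\|_{L^2}^2$ via the substitution $r=2s+x-t$. Taking $L^\infty_x$ norms of the characteristic formula (and the analogous one for $\phi_N$) yields
\[
  \|\psi_N(t)\|_{L^\infty_x}^2+\|\phi_N(t)\|_{L^\infty_x}^2 \lesa_m \|f\|_{L^2}^2+\|g\|_{L^2}^2 + \int_0^t\big(\|\psi_N(s)\|_{L^\infty_x}^2+\|\phi_N(s)\|_{L^\infty_x}^2\big)\,ds,
\]
so Gronwall closes the $L^\infty$ bound for $0\les t\les T$. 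For general $2\les p\les\infty$ I would interpolate: conservation of charge for $(\psi,\phi)$ together with $\|\psi_L(t)\|_{L^2}=\|f\|_{L^2}$ and $\|\phi_L(t)\|_{L^2}=\|g\|_{L^2}$ (from the modulus identities) gives $\|\psi_N(t)\|_{L^2}+\|\phi_N(t)\|_{L^2}\lesa\|f\|_{L^2}+\|g\|_{L^2}$, and then $\|\psi_N(t)\|_{L^p}\les\|\psi_N(t)\|_{L^2}^{2/p}\|\psi_N(t)\|_{L^\infty}^{1-2/p}$ finishes it.

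The main obstacle is conceptual rather than computational: recognizing that the cubic self-interaction drops out of the evolution of the modulus (the Delgado observation) and then arranging the decomposition so that the source term for $|\psi_N|^2$ sees $\phi=\phi_L+\phi_N$ with $\phi_L$ contributing only the time-integrable-along-characteristics quantity $\|g\|_{L^2}^2$ and $\phi_N$ feeding back in a Gronwall-admissible way. Everything after that — existence/smoothness of the auxiliary solutions, the Gronwall step, and the interpolation — is routine.
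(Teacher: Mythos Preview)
Your proposal is correct and follows essentially the same argument as the paper: the same decomposition into $(\psi_L,\phi_L)$ and $(\psi_N,\phi_N)$, the same Delgado-type computation for the moduli, the same characteristic integration and Gronwall step for the $L^\infty$ bound, and the same interpolation via conservation of charge for general $p$. Your additional remarks on why the mass terms must sit in the $N$-part and why the full coefficients $|\phi|^2$, $|\psi|^2$ are retained in both subsystems are accurate and clarify the structure of the decomposition.
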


The above proposition contains the decomposition alluded to in the introduction. Essentially the term $\psi_L$ is linear while the remaining term, $\psi_N$, has vanishing initial data and more integrability than one would naively expect. This additional integrability will then allow us to rule out concentration of charge.\\

The proof of Theorem \ref{global} in the case $s=0$ is now straightforward.

\begin{proof}[Proof of Theorem \ref{global} in the case $s=0$]
Suppose  $f, g \in L^2$ and let $(\psi, \phi) \in C\big( [0, T) , L^2\big)$ be the corresponding solution to (\ref{Dirac}) where $[0, T)$ is the maximal forward time of existence.  By Theorem \ref{local} it suffices to prove that
        \begin{equation}\label{global eqn1} \limsup_{t \rightarrow T} \,\sup_{x\in \RR}  \int_{|x-y|<4( T-t )} |\psi(t)|^2 + |\phi(t)|^2 dy = 0. \end{equation}
Since assuming (\ref{global eqn1}) holds, there exists $0<t^*<T$ such that $4(T-t^*)<\frac{1}{8|m|}$ and
            $$\sup_{x\in \RR}  \int_{|x-y|<4(T-t^*)} |\psi(t^*)|^2 + |\phi(t^*)|^2 dy < \epsilon $$
where $\epsilon = \epsilon(\lambda)$ is the small constant from the proof of Theorem \ref{local}. Taking $(\psi(t^*), \phi(t^*))$ as new initial data, by Theorem \ref{local} we can extend the solution to $[0, T) \cup [t^*, t^* + 2(T-t^*)]$.
 However, since $t^* + 2(T - t^*) >T$, this contradicts the assumptions that $[0, T)$ was the maximal forward time of existence. Therefore we must have $T=\infty$ and so solution exists globally in time.

 We now prove (\ref{global eqn1}). Since the solution depends continuously on the initial data, we may assume that $f, g \in C^\infty_0$. An application of Proposition \ref{decomp} with $p=\infty$ shows that
        \begin{align*}
               \sup_{x \in \RR}  \int_{|y-x|< 4(T-t) } |\psi(t)|^2 + |\phi(t)|^2 dy &\lesa \sup_{x \in \RR} \int_{|y-x|< 4(T-t)} |f(y-t)|^2 + |g(y+t)|^2 dy \\
               &\qquad \qquad \qquad \qquad \qquad \qquad+ (T-t) \big( \| \psi_N(t) \|_{L^\infty_y}^2 + \| \phi_N(t) \|_{L^\infty_y}^2 \big) \\
                            &\lesa_T \sup_{x \in \RR} \int_{|y-x+T|<5( T-t )} |f(y)|^2 dy \\
                             &\qquad \qquad + \sup_{x \in \RR} \int_{|y - x - T| < 5(T-t)} |g(y)|^2 dy + (T-t) \big( \| f \|_{L^2}^2 + \| g\|_{L^2}^2 \big).
        \end{align*}
 Hence letting $t$ tend to $T$ we obtain (\ref{global eqn1}) and so we have global existence forward in time.

 To obtain global existence backwards in time suppose $(\psi, \phi)$ is a solution to (\ref{Dirac}) on $(-T, 0]$ and define  $\psi'(t, x) = \phi(-t, x)$ and $\phi'(t, x) = \psi(-t, x)$. Then $(\psi', \phi')$ solves (\ref{Dirac}) on $[0, T)$ with $m$ and $\lambda$ replaced with $-m$ and $-\lambda$. The forwards in time argument above then shows that we can extend $(\psi', \phi')$ to $[0, \infty)$. Undoing the time reversal we see that we have a solution $(\psi, \phi)$ on $(-\infty, 0]$. Therefore for every initial data $f, g \in L^2$ we have a global solution $(\psi, \phi) \in C(\RR, L^2)$ to (\ref{Dirac}).

\end{proof}

\section{Persistence of Regularity}\label{section persist}

In this section we extend the global result for $s> \frac{1}{2}$ of Selberg and Tesfahun \cite{Selberg2010b} to $s>0$. This completes the proof of Theorem \ref{global}.  Ideally, since we already have global existence for $s=0$, we would like to include the $s>0$ result in the $L^2$ iteration scheme by using the standard persistence of regularity type arguments. However, since the $Y_R$, $X_R$ norms contain derivatives in $L^1$, they do not interact very well with fractional derivatives. Consequently, the proof of global existence for $s>0$ is slightly more complicated than the $L^2$ case and some technical results on localised Sobolev spaces are required.  We remark that we still make no use of the $X^{s, b}$ type spaces, thus null coordinates can also be used for $s >0$, see also \cite{Machihara2010}.\\

 The main result we prove in this section is the following.

\begin{theorem}\label{local bound}
Let $0<s<\frac{1}{4}$ and $ \frac{1}{2} = \frac{1}{p} + s$. There exists a  small constant $0<\epsilon^*<1$  such that if $|m|<\epsilon^*$ and $f, g \in H^s$ satisfy
            \begin{equation}\label{local bound eqn1} \| f\|_{L^p} + \| g\|_{L^p} < \epsilon^*, \end{equation}
  then there exists a solution $(\psi, \phi) \in C([-1,1] , H^s)$ solving (\ref{Dirac}). Moreover, the solution is unique in a subspace of $C([-1,1], H^s)$ and depends continuously on the initial data.
\end{theorem}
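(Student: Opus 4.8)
The plan is to run a persistence-of-regularity argument on top of the global $L^2$ solution provided by the $s=0$ case of Theorem~\ref{global}. Given $f,g\in H^s$ (hence in $L^2$), let $(\psi,\phi)\in C(\RR,L^2)$ be that solution; by the already-established continuous dependence we may assume $f,g\in C^\infty_0$, so $(\psi,\phi)$ is smooth and Proposition~\ref{decomp} furnishes the splitting $\psi=\psi_L+\psi_N$, $\phi=\phi_L+\phi_N$ with $|\psi_L(t,x)|=|f(x-t)|$, $|\phi_L(t,x)|=|g(x+t)|$ and $\|\psi_N(t)\|_{L^\infty_x}+\|\phi_N(t)\|_{L^\infty_x}\lesa_{m}\|f\|_{L^2}+\|g\|_{L^2}$ for $|t|\les 1$. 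It then suffices to bound $\|\psi(t)\|_{\dot H^s_x}+\|\phi(t)\|_{\dot H^s_x}$ uniformly on $[-1,1]$; combined with conservation of charge this gives $(\psi,\phi)\in C([-1,1],H^s)$, while the uniqueness and continuous dependence claims will fall out of the difference versions of the estimates below, since any competing $H^s$ solution is the $L^2$ solution by Theorem~\ref{local}.

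I would change into null coordinates and localise to the box $\Omega_R$ with $R=2$: this is admissible because $|m|<\epsilon^*$ keeps $R<\tfrac1{16|m|}$, and the slabs $\Omega^*_R(jR)$, $j\in\ZZ$, cover $[-1,1]\times\RR$ exactly as in the proof of Theorem~\ref{local}, so it is enough to work on a single $\Omega_R$ and patch by translation invariance. Localisation is genuinely needed: the key manipulations use that on a bounded interval $L^\infty(I_R)\hookrightarrow L^q(I_R)$ for every $q$, and that for $s<\tfrac14$ (equivalently $sp<1$, with $\tfrac1p=\tfrac12-s$) the localised fractional Sobolev spaces $\dot{W}^{s,2}(I_R)$ and $\dot{W}^{s,1/s}(I_R)$ are well behaved under restriction and extension --- this is the content deferred to the Appendix. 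The quantities to control are $\|D^s_\beta\psi^*\|_{L^\infty_\alpha L^2_\beta(\Omega_R)}$ and $\|D^s_\alpha\phi^*\|_{L^\infty_\beta L^2_\alpha(\Omega_R)}$ ($D^s$ acting in the indicated variable), alongside the auxiliary norms $\|\psi^*\|_{L^\infty_\alpha L^p_\beta(\Omega_R)}$, $\|\phi^*\|_{L^\infty_\beta L^p_\alpha(\Omega_R)}$, which by the localised embedding $\dot{W}^{s,2}(I_R)\cap L^2(I_R)\hookrightarrow L^p(I_R)$ are small, of size $\lesa\epsilon^*$, along $[-1,1]$. Applying $D^s_\beta$ to the Duhamel identity $\psi^*(\alpha,\beta)=f(\beta)+\tfrac12\int_\beta^\alpha F^*(\gamma,\beta)\,d\gamma$ (with $F=-im\phi-i2\lambda|\phi|^2\psi$) and using Minkowski's inequality as in Proposition~\ref{energy} gives
$$\|D^s_\beta\psi^*\|_{L^\infty_\alpha L^2_\beta(\Omega_R)}\les \|f\|_{\dot H^s}+\tfrac12\big\|D^s_\beta F^*\big\|_{L^1_\alpha L^2_\beta(\Omega_R)},$$
and symmetrically for $\phi$; the plan is to bound the source term and close by Gronwall.

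The heart of the proof is the estimate of $\|D^s_\beta(|\phi^*|^2\psi^*)\|_{L^1_\alpha L^2_\beta(\Omega_R)}$ and its difference analogue. A localised fractional Leibniz rule in $\beta$ splits it into: (a) the term with $D^s_\beta$ on $\psi^*$, where one uses $|\phi^*|^2\les |g(\gamma)|^2+|\phi^*_N(\gamma,\cdot)|^2$ --- the constant-in-$\beta$ factor $|g(\gamma)|^2$ is an $L^\infty_\beta$ multiplier and, after the $\gamma$-integration, is pulled out by H\"older in $\alpha$ with exponents forced by $\tfrac2p+2s=1$, giving $\lesa_R\|g\|_{L^p}^2\,\|D^s_\beta\psi^*\|_{L^\infty_\alpha L^2_\beta}$ with the small coefficient $\|g\|_{L^p}^2\les(\epsilon^*)^2$, while the $|\phi^*_N|^2$ factor yields $\lesa\|\phi_N\|_{L^\infty_{t,x}}^2\,\|D^s_\beta\psi^*\|_{L^\infty_\alpha L^2_\beta}$ with merely finite coefficient; and (b) the terms with $D^s_\beta$ on a factor $\phi^*$, where the structure is essential: writing $\phi^*(\alpha,\beta)=g(\alpha)-\tfrac12\int_\alpha^\beta G^*(\alpha,\tau)\,d\tau$, the $g(\alpha)$ summand is annihilated by $D^s_\beta$ and the integral lies in $\dot{W}^{s,1/s}_\beta$ by $W^{1,1}(I_R)\hookrightarrow\dot{W}^{s,1/s}(I_R)$ (here $s<\tfrac14$, hence $\tfrac1s>4>2$, enters so that $L^{1/s}(I_R)\hookrightarrow L^2(I_R)$), with norm controlled by $\|G^*(\alpha,\cdot)\|_{L^1_\beta}\lesa|m|\,\|\psi^*(\alpha,\cdot)\|_{L^2_\beta}+|\lambda|\,\|\psi^*(\alpha,\cdot)\|_{L^p_\beta}^2\,\|\phi^*(\alpha,\cdot)\|_{L^{1/(2s)}_\beta}$, whose coefficients $|m|$ and $\|\psi^*\|_{L^p}^2$ are small. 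The linear term $m\phi$ in $F$ is treated the same way. Collecting (and doing the analogous computation for $\phi$) one arrives at a coupled pair of integral inequalities, schematically $A\les\|f\|_{\dot H^s}+C\epsilon^*(A+B)+C\big(\|\psi_N\|_{L^\infty_{t,x}}^2+\|\phi_N\|_{L^\infty_{t,x}}^2\big)(A+B)$ for the two $\dot{W}^{s,2}$ quantities $A,B$, which Gronwall on $I_R$ closes once $C\epsilon^*$ is small; the remaining $L^1$-with-$D^s$ pieces are then read off from the Duhamel formulas.

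Finally, translation invariance and the uniqueness of the $L^2$ solution patch the localised bounds over $\bigcup_j\Omega^*_R(jR)\supset[-1,1]\times\RR$, and an embedding lemma analogous to Corollary~\ref{emb} but at regularity $s$ --- for which one again needs the Appendix results to pass from $L^\infty_\alpha\dot{W}^{s,2}_\beta(\Omega_R)$ back to $C\big(I_T,H^s(I_{R-T})\big)$ --- upgrades the conclusion to $(\psi,\phi)\in C([-1,1],H^s)$; uniqueness in the corresponding subspace and continuous dependence follow from the difference versions of everything above. I expect the main obstacle to be precisely this nonlinear product estimate in the localised, anisotropic setting: choosing the H\"older exponents in the two null variables so that every factor sits in a space it actually belongs to (exploiting that $\psi^*$ and $\phi^*$ carry fractional regularity only in complementary null directions, and that the profiles $\psi_L$, $\phi_L$ are constant in the bad direction), keeping the restriction/extension errors for the fractional Sobolev spaces on $I_R$ under control, and carefully separating the contributions carrying a genuinely small coefficient from $\|f\|_{L^p}+\|g\|_{L^p}+|m|<\epsilon^*$ from those that are only finite and must be absorbed by Gronwall.
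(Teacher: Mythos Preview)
Your approach is essentially the paper's: localise to null boxes $\Omega_2$, use the $\psi_L/\psi_N$ decomposition, estimate localised $H^s$ norms via energy inequalities and product estimates, then patch by translation and the summation lemma for $H^s(\RR)$. The paper packages this via auxiliary norms $Y^s,X^s$ (carrying $L^\infty_\alpha H^s_\beta$, $\p_\alpha\in L^q_\alpha L^2_\beta$, and $\p_\alpha\in L^1_\alpha H^s_\beta$) together with a clean product bound $\||\phi|^2\psi\|_{H^s(I_2)}\lesa\|\phi\|_{L^\infty}^2\|\psi\|_{H^s}+\|\phi\|_{L^\infty}\|\phi\|_{W^{1,q}}\|\psi\|_{L^p}$, but your Leibniz-by-hand is the same idea.

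There is, however, one genuine misstep in the closure. You write that the $|\phi_N^*|^2$ contribution carries a ``merely finite'' coefficient $\|\phi_N\|_{L^\infty_{t,x}}^2\lesa(\|f\|_{L^2}+\|g\|_{L^2})^2$ and propose to absorb it by Gronwall. Neither half of this works. First, Gronwall does not apply: your quantities $A=\|D^s_\beta\psi^*\|_{L^\infty_\alpha L^2_\beta}$ and $B=\|D^s_\alpha\phi^*\|_{L^\infty_\beta L^2_\alpha}$ are suprema, and even if you track the running values, $A$ is a function of $\alpha$ while $B$ is a function of $\beta$, so the coupled pair has no Volterra structure to iterate. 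Second --- and this is what rescues you --- the coefficient is not merely finite, it is \emph{small}. On the localised box only the data on $I_2$ enters, and since $p>2$, H\"older on the bounded interval gives $\|f\|_{L^2(I_2)}+\|g\|_{L^2(I_2)}\lesa\|f\|_{L^p(I_2)}+\|g\|_{L^p(I_2)}<\epsilon^*$; the localised decomposition (the paper's Lemma~\ref{persist est}) then yields $\|\psi_N^*\|_{L^\infty(\Omega_2)}+\|\phi_N^*\|_{L^\infty(\Omega_2)}\lesa\|\psi\|_{Y_2}+\|\phi\|_{X_2}<2\epsilon^*$. With every coefficient on the right of order $\epsilon^*$ or $(\epsilon^*)^2$ the argument closes by direct absorption, no Gronwall needed --- this is exactly what the paper does in Theorem~\ref{local persist}. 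One further point: the paper does not rebuild continuous dependence from scratch but imports the $H^s$ local theory of Selberg--Tesfahun \cite{Selberg2010b} to pass from smooth approximants to general $H^s$ data and to extend from their short time $T^*$ to the full interval $[-1,1]$; your ``difference versions of everything above'' would have to reproduce that machinery.
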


The small mass assumption in Theorem \ref{local bound} is required as the interval of existence is $[-1, 1]$. To motivate this consider the local existence result in $L^2$, Theorem \ref{localised}, where we needed the time of existence\footnote{Note that the size of the domain $\Omega_R$ was essentially the time of existence of a solution.} to satisfy $T\lesa \frac{1}{|m|}$. Thus if $T=1$ we have to take the mass, $m$, to be small. \\

Assuming Theorem \ref{local bound} holds, the proof of Theorem \ref{global} is straightforward.

\begin{proof}[Proof of Theorem \ref{global} in the case $s>0$]
The persistence of regularity proven by Selberg and Tesfahun in \cite{Selberg2010b}, reduces the problem to showing global existence for $0<s<\frac{1}{4}$. We now make use of a simple scaling argument. Take $f, g \in H^s$ and define $f_\tau= \tau^{\frac{1}{2}} f( \tau x)$, $g_\tau = \tau^{\frac{1}{2}} g(\tau x)$, $m' = \tau m$. By choosing $\tau$ sufficiently small we see that $f, g,$ and $m'$ satisfy the conditions of Theorem \ref{local bound}. Therefore we get a solution $(\psi_\tau, \phi_\tau)  \in C([-1, 1], H^s)$ to (\ref{Dirac}) with $m$ replaced by $m'$. To undo the scaling we let $\psi(t, x) = \tau^{-\frac{1}{2}} \psi_\tau(\tau^{-1} t, \tau^{-1}x)$ and define $\phi$ similarly. It is easy to see that $(\psi, \phi)$ is a solution to (\ref{Dirac}) with $(\psi, \phi) \in C([-T, T], H^s)$ where $T$ only depends on the size of some negative power of $\| f\|_{L^p} + \| g \|_{L^p}$. To conclude the proof we note that the decomposition in Proposition \ref{decomp} shows that $\| \psi(t) \|_{L^p} + \| \phi(t) \|_{L^p}< C(t)<\infty$ for every $t \in \RR$ where $C(t) \in L^\infty_{loc}(\RR)$.  Therefore the solution must exist globally in time.
\end{proof}

We have reduced the proof of global existence for $s>0$ to proving the local result in Theorem \ref{local bound}. The main tool to do this is again the use of null coordinates together with a decomposition along the lines of Proposition \ref{decomp}.\\

We now present some results on localised Sobolev spaces which we require in the proof of Theorem \ref{local bound}. To start with note that any inequality for $W^{s, p}(\RR)$ implies a corresponding inequality for the localised space $W^{s, p}(I)$ for any interval $I \subset \RR$. In particular, if $\frac{1}{q} \les \frac{1}{p} + s$ and $1<p\les q <\infty$, we have Sobolev embedding
        $$ \| f\|_{L^p(I)} \lesa  \|f \|_{W^{s, q}(I)}$$
and if $0<s<\frac{1}{2}$ and $y \in \RR$ we  have Hardy's inequality\footnote{See for instance page 334 of \cite{Tao2006b} for a proof of Hardy's inequality on $\RR^n$.}
        $$ \bigg\| \frac{f(x)}{ |x-y|^s} \bigg\|_{L^2_x(I)} \lesa \| f\|_{\dot{H}^s(I)}.$$
We also make use of the following well known characterisation of localised Sobolev spaces.

\begin{theorem}\label{Intrins}
Let $0<s<\frac{1}{2}$.
\begin{enumerate}
    \item Then\footnote{We use $ a \backsimeq b$ to denote the set of inequalities $a \lesa b \lesa a$.}
                $$ \| f\|_{H^s(I_R)}^2 \backsimeq_R \|f \|_{L^2(I_R)}^2 + \int_{I_R} \int_{I_R} \frac{ | f(x) - f(y) |^2}{|x-y|^{1 + 2s} } dx dy$$

    \item  Take $I_R(x) = [x - R, x+R]$. Then
                     $$ \| f\|_{H^s(\RR)}^2 \lesa \sum_{j \in \ZZ} \|f\|_{H^s(I_1(j))}^2 $$
        and
                        $$ \sum_{j \in \ZZ} \|f\|_{H^s(I_{2} (j))}^2 \lesa \| f\|_{H^s(\RR)}^2.$$

    \item If $\frac{1}{2}= \frac{1}{p} + s$ and $s<\frac{1}{q} <1$ then we have
            $$ \big\| |g|^2 f \big\|_{H^s(I_2)} \lesa \| g \|_{L^\infty(I_2)}^2 \| f \|_{H^s(I_2)} + \| g\|_{L^\infty(I_2)} \| g\|_{W^{1, q}(I_2)} \| f\|_{L^p(I_2)} .$$

\end{enumerate}
\begin{proof}
See the appendix.
\end{proof}
\end{theorem}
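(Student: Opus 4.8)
The plan is to establish the three parts in turn, in each case reducing to a statement on the whole line $\RR$.

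\emph{Part (1).} The starting point is the classical Plancherel identity
\[
\int_{\RR}\int_{\RR}\frac{|f(x)-f(y)|^2}{|x-y|^{1+2s}}\,dx\,dy = c_s\,\|D^s f\|_{L^2(\RR)}^2,\qquad 0<s<1,
\]
obtained by the substitution $y=x-h$, Plancherel in $x$, and the fact that $c_s=\int_{\RR}|e^{ih}-1|^2|h|^{-1-2s}\,dh$ is finite for $0<s<1$; together with $\|f\|_{H^s(\RR)}^2\les C(\|f\|_{L^2}^2+\|D^sf\|_{L^2}^2)$ this handles the whole line. The inequality ``$\gtrsim$'' on $I_R$ is then immediate: restrict a near-optimal $H^s(\RR)$ extension of $f$ to $I_R$ and discard the rest of both integrals. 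For ``$\lesssim$'' I would construct a bounded extension operator on $I_R$ by reflecting $f$ across the two endpoints and multiplying by a fixed $C^\infty_0$ cutoff; the only point to check is that on each product region (original$\times$original, reflected$\times$reflected, original$\times$reflected) the reflection $x\mapsto 2R-x$ leaves $|x-y|$ unchanged or only enlarges it, so the Gagliardo double integral of the reflected function is dominated by that of $f$, and multiplication by a smooth compactly supported function is harmless on $H^s(\RR)$.

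\emph{Part (2).} Both inequalities follow from Part (1) by summation. For the $L^2$ terms: the unit intervals $[j-\tfrac12,j+\tfrac12]$ tile $\RR$ and each lies in $I_1(j)$, while $\sum_j\mathbf 1_{I_2(j)}\les C$ (bounded overlap), which gives the two statements. For the Gagliardo terms, split the double integral over $\RR^2$ into $|x-y|<\tfrac12$ and $|x-y|\g\tfrac12$. On the near part, $x\in[j-\tfrac12,j+\tfrac12]$ forces $y\in I_1(j)$, so the contribution is at most $\sum_j$ of the local seminorms on $I_1(j)$; on the far part the kernel is bounded so the contribution is $\lesa\|f\|_{L^2(\RR)}^2$. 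The reverse inequality for the $I_2(j)$ is the same counting, now using that each point of $\RR$ lies in a bounded number of the $I_2(j)$.

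\emph{Part (3).} This is the heart of the matter. Using the intrinsic norm from Part (1), bound the $L^2$ part by $\|g\|_{L^\infty(I_2)}^2\|f\|_{H^s(I_2)}$, and for the seminorm part split
\[
|g(x)|^2 f(x)-|g(y)|^2 f(y) = |g(x)|^2\big(f(x)-f(y)\big) + \big(|g(x)|^2-|g(y)|^2\big)f(y).
\]
The first term contributes at most $\|g\|_{L^\infty(I_2)}^4$ times the $H^s(I_2)$ seminorm of $f$. For the second term, $\big||g(x)|^2-|g(y)|^2\big|\les 2\|g\|_{L^\infty(I_2)}|g(x)-g(y)|$ reduces the problem to
\[
\int_{I_2}\int_{I_2}\frac{|g(x)-g(y)|^2\,|f(y)|^2}{|x-y|^{1+2s}}\,dx\,dy \lesa \|g\|_{W^{1,q}(I_2)}^2\,\|f\|_{L^p(I_2)}^2.
\]
For this I would use the localized Morrey inequality
\[
|g(x)-g(y)| \les \Big(\int_{[\min(x,y),\max(x,y)]}|g'|^q\Big)^{1/q}|x-y|^{1-1/q},
\]
estimate $\big(\int_A|g'|^q\big)^{2/q}$ by $\|g'\|_{L^q(I_2)}^{2-q}\int_A|g'|^q$ when $1<q\les 2$ (and simply by $\|g'\|_{L^q(I_2)}^2$ when $q\g 2$, the residual kernel $|x-y|^{1-2/q-2s}$ then being locally integrable by itself), interchange the order of integration to put $\int|g'|^q$ on the outside, and bound $(|x-\tau|+|y-\tau|)^{1-2/q-2s}\les|x-\tau|^{e_1}|y-\tau|^{e_2}$ on the set where $\tau$ lies between $x$ and $y$, for an admissible split $e_1+e_2=1-2/q-2s$ with $e_1,e_2\les 0$. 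The $x$-integral then converges to a constant and the $y$-integral is $\int_{I_2}|f(y)|^2|y-\tau|^{e_2}\,dy\lesa\|f\|_{L^p(I_2)}^2$ by H\"older's inequality, since $\tfrac12=\tfrac1p+s$ makes the dual exponent equal to $\tfrac1{2s}$ (alternatively one invokes Hardy's inequality at this step). The part I expect to be the main obstacle is precisely this exponent bookkeeping --- choosing $e_1,e_2$ so that both the $x$- and the $y$-integrals converge is exactly where the hypotheses on $p,q,s$ are used. A softer alternative that avoids the hand computation is to extend $g$ across $\partial I_2$ by reflection and $f$ by zero (legitimate since $\mathbf 1_{I_2}$ is a pointwise multiplier on $H^s(\RR)$ for $s<\tfrac12$), and then to apply the fractional Leibniz rule on $\RR$ together with the Sobolev embedding $W^{1,q}(\RR)\hookrightarrow W^{s,1/s}(\RR)$, which holds precisely because $s<\tfrac1q$.
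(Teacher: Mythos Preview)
Your arguments for parts (i) and (ii) are essentially the paper's own: the same reflection-plus-cutoff extension operator for (i), and the same near/far splitting of the Gagliardo double integral for (ii). The paper organises the far contribution slightly differently --- it sums over $I_{1/2}(j)\times\{|x-j|>1\}$ and finishes with Young's inequality for sequences rather than simply noting that $|h|^{-1-2s}$ is integrable at infinity --- but the content is identical.

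For part (iii) the paper takes precisely your ``softer alternative'': it first proves the product estimate on all of $\RR$ via the fractional Leibniz rule (Littlewood--Paley) together with the embedding $W^{1,q}(\RR)\hookrightarrow W^{s,1/s}(\RR)$, and then localises to $I_2$ by applying the reflection extension $\mathcal{E}$ from part (i) to both $f$ and $g$ (rather than extending $f$ by zero as you propose, though your use of the multiplier property of $\mathbf 1_{I_2}$ on $H^s$ for $s<\tfrac12$ is equally valid). Your primary route --- working directly with the Gagliardo seminorm on $I_2$, invoking the pointwise Morrey bound $|g(x)-g(y)|\les|x-y|^{1-1/q}\big(\int_{[x,y]}|g'|^q\big)^{1/q}$, and splitting the resulting exponent as $|x-\tau|^{e_1}|y-\tau|^{e_2}$ --- is a genuinely different and more elementary argument: it avoids paraproducts and never leaves the interval, and it makes transparent exactly where each hypothesis on $p,q,s$ enters. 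The price is the exponent bookkeeping you flag; after the case split $q\les 2$ versus $q\g 2$, the constraints $-1<e_1\les 0$ and $-2s<e_2\les 0$ are indeed simultaneously satisfiable under the standing hypotheses $q>1$ and $s<\tfrac1q$, so the computation closes. The paper's route is quicker to write but leans on heavier black boxes; yours is self-contained but requires the explicit integral manipulation.
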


We also require the following estimates.

\begin{proposition}\label{int est}
Let $0<s<\frac{1}{2} $.
\begin{enumerate}
 \item Then
            $$ \Big\| \int_{-2}^x F(t', x) dt' \Big\|_{H^s_x(I_2)} \lesa\| F\|_{L^1_t H^s_x(\Omega_2)}.$$

\item We have
            $$ \| \phi^* \|_{L^1_\alpha H^s_\beta ( \Omega_2) } \lesa  \| \phi \|_{X_2} $$
     and
            $$ \| \psi^* \|_{L^1_\beta  H^s_\alpha( \Omega_2) } \lesa  \| \psi \|_{Y_2}. $$
\end{enumerate}
\begin{proof}
We start with $(i)$. The characterisation in Theorem \ref{Intrins} shows that
            $$\Big\| \int_{-2}^x F(t', x) dt' \Big\|_{H^s_x(I_2)}^2 \lesa \Big\|  \int_{-2}^x F(t', x) dt' \Big\|_{L^2_x(I_2)}^2+
                    \int_{I_2} \int_{I_2} \frac{ \big| \int_{-2}^x F(t', x) dt'  - \int_{-2}^y F(t', y) dt' \big|^2}{|x-y|^{1+2s}} dydx. $$
The $L^2$ component is easily controlled by using Minkowski's inequality. For the remaining part we note that, by symmetry, we may assume $x>y$. Then using the inequality
        \begin{align*} \big| \int_{-2}^x F(t', x) dt'  - \int_{-2}^y F(t', y) dt'\big| &\les \int_y^x | F(t', x) | dt' + \int_{-2}^y  | F(t', x) - F(t', y) |dt' \\
                &\les \int_y^x | F(t', x) | dt' + \| F(x) - F(y)\|_{L^1_t(I_2)} \end{align*}
we reduce to estimating the integrals
        $$\int_{I_2} \int_{I_2} \frac{ \big(  \int_y^x |F(t', x)|  dt' \big)^2}{|x-y|^{1+2s}} dydx$$
and
        $$\int_{I_2} \int_{I_2} \frac{ \| F(x) - F(y)\|_{L^1_t(I_2)}^2 }{|x-y|^{1+2s}} dydx .$$
The latter is again easily controlled by an application of Minkowski's inequality and so it only remains to estimate the former. To this end note that \begin{align*}
       \int_{I_2} \int_{I_2} \frac{ \big(  \int_y^x |F(t', x)|  dt' \big)^2}{|x-y|^{1+2s}} dydx &\les \bigg( \int_{I_2} \Big( \int_t^2\int_{-2}^t \frac{ |F(t, x)|^2 }{|x-y|^{1+2s}} dy dx \Big)^{\frac{1}{2} }dt \bigg)^2\\
                            &\lesa \bigg( \int_{I_2} \Big( \int_t^2 \frac{ |F(t, x)|^2 }{|x-t|^{2s}} + \frac{ |F(t, x)|^2 }{|x+2|^{2s}}  dx \Big)^{\frac{1}{2}} dt \bigg)^2\\
                            &\les \bigg( \int_{I_2} \bigg\| \frac{F(t, x)}{|x+2|^s} \bigg\|_{L^2_x(I_2)} + \bigg\| \frac{F(t, x) }{|x-t|^s}\bigg \|_{L^2_x(I_2)} dt \bigg)^2 \\
                            &\lesa \| F\|^2_{L^1_t H^s_x (\Omega_2)}
       \end{align*}
where we needed $0<s<\frac{1}{2}$ to apply Hardy's inequality.

To prove the first inequality in $(ii)$ we note that by H\"{o}lder's inequality together with Lemma \ref{est} it suffices to show that for all $\alpha \in I_2$
            $$ \| \phi^* \|_{H^s_\beta(I_2)} \lesa \| \phi^* \|_{L^\infty_\beta(I_2)} + \| \p_\beta \phi^* \|_{L^1_\beta(I_2)}.$$
To this end we note that
        \begin{align*}
             \| \phi^* \|_{H^s_\beta(I_2)}^2  &\lesa \| \phi^* \|_{L^2_\beta(I_2)}^2 +  \int_{I_2} \int_{I_2} \frac{|\phi^*(\sigma) - \phi^*(\gamma) |^2}{|\sigma - \gamma |^{1+2s} } d \sigma d \gamma \\
                                        &\lesa \| \phi^* \|_{L^\infty_\beta(I_2)}^2 +  \| \phi^* \|_{L^\infty_\beta(I_2)} \int_{I_2} \int_{I_2} \int_\sigma^\gamma \frac{|\p_\beta \phi^*(\beta)| }{|\sigma - \gamma |^{1+2s} }d\beta d \sigma d \gamma
        \end{align*}
where, as before,  we may assume $\sigma < \gamma$. To control the integral term we just change the order of integration to obtain
        \begin{align*}
            \int_{I_2} \int_{I_2} \int_\sigma^\gamma \frac{|\p_\beta \phi^*(\beta) |}{|\sigma - \gamma |^{1+2s} }d\beta d \sigma d \gamma
                            &= \int_{I_2} |\p_\beta \phi^*(\beta)| \int_\beta^2 \int_{-2}^\beta | \gamma - \sigma |^{-1 - 2s} d \gamma d \sigma d \beta \\
                            &\lesa \| \p_\beta \phi^* \|_{L^1(I_2)}
        \end{align*}
since $0<s<\frac{1}{2}$. Therefore the first inequality in $(ii)$ follows. The second is similar and we omit the details.
\end{proof}
\end{proposition}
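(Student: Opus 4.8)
The plan is to reduce both parts of the proposition to the Gagliardo-type description of the localised Sobolev norm furnished by Theorem~\ref{Intrins}(i), namely $\|h\|_{H^s(I_2)}^2 \lesa \|h\|_{L^2(I_2)}^2 + \iint_{I_2\times I_2}\frac{|h(x)-h(y)|^2}{|x-y|^{1+2s}}\,dx\,dy$, and then to estimate the resulting double integrals by Minkowski's integral inequality together with Hardy's inequality (both valid precisely because $0<s<\tfrac12$). In each case the $L^2(I_2)$ piece is harmless: for part~(i), Minkowski's integral inequality gives $\big\|\int_{-2}^x F(t',x)\,dt'\big\|_{L^2_x(I_2)}\les \int_{I_2}\|F(t',\cdot)\|_{L^2_x(I_2)}\,dt' \lesa \|F\|_{L^1_t H^s_x(\Omega_2)}$, and for part~(ii) boundedness of $I_2$ together with Lemma~\ref{est} handles it. So the real content is the fractional seminorm.

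For part~(i), set $g(x)=\int_{-2}^x F(t',x)\,dt'$. For $x>y$ I would split
\[ g(x)-g(y) = \int_y^x F(t',x)\,dt' + \int_{-2}^y\big(F(t',x)-F(t',y)\big)\,dt', \]
so that $|g(x)-g(y)| \les \int_y^x |F(t',x)|\,dt' + \|F(\cdot,x)-F(\cdot,y)\|_{L^1_t(I_2)}$. The commutator-type second term is handled by Minkowski's integral inequality, which moves the $t'$-integration outside the weighted $L^2_{x,y}$ norm and, after one further application of Theorem~\ref{Intrins}(i) in the $x$ variable for each fixed $t'$, produces $\int_{I_2}\|F(t',\cdot)\|_{\dot{H}^s(I_2)}\,dt' \les \|F\|_{L^1_t H^s_x(\Omega_2)}$. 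The first term is the crux: writing $\int_y^x|F(t',x)|\,dt' = \int_{I_2}|F(t',x)|\,\mathbf 1_{\{y\les t'\les x\}}\,dt'$ and applying Minkowski's integral inequality over the measure $\mathbf 1_{\{x>y\}}|x-y|^{-1-2s}\,dx\,dy$, it suffices to bound, for each fixed $t'\in I_2$,
\[ \int_{t'}^{2}|F(t',x)|^2\Big(\int_{-2}^{t'}\frac{dy}{(x-y)^{1+2s}}\Big)\,dx \lesa_s \int_{t'}^{2}\frac{|F(t',x)|^2}{(x-t')^{2s}}\,dx, \]
using $\int_{-2}^{t'}(x-y)^{-1-2s}\,dy \lesa_s (x-t')^{-2s}$ for $x>t'$. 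By Hardy's inequality the right side is $\lesa \|F(t',\cdot)\|_{\dot{H}^s(I_2)}^2$, and integrating the square root in $t'$ completes part~(i).

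For part~(ii) I would first observe, via Hölder's inequality in $\alpha$ (legitimate since $I_2$ is bounded) and the embedding $\|\phi^*\|_{L^2_\alpha L^\infty_\beta(\Omega_2)}\lesa\|\phi\|_{X_2}$ of Lemma~\ref{est}, that it is enough to establish the pointwise-in-$\alpha$ bound $\|\phi^*(\alpha,\cdot)\|_{H^s_\beta(I_2)} \lesa \|\phi^*(\alpha,\cdot)\|_{L^\infty_\beta(I_2)} + \|\p_\beta\phi^*(\alpha,\cdot)\|_{L^1_\beta(I_2)}$; integrating this in $\alpha$ and applying Hölder once more — to the first term through Lemma~\ref{est} and to the derivative term through $L^1_\alpha\subset L^2_\alpha$ on $I_2$ — recovers the $X_2$ norm. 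To prove the pointwise bound for a function $h$ of one variable on $I_2$, I would use $|h(\gamma)-h(\sigma)|^2 \lesa \|h\|_{L^\infty(I_2)}\int_\sigma^\gamma|h'(\beta)|\,d\beta$ for $\sigma<\gamma$, insert this into the Gagliardo integral, interchange the order of integration so that $\beta$ becomes the outer variable, and check that the remaining kernel integral $\int_{-2}^{\beta}\int_{\beta}^{2}(\gamma-\sigma)^{-1-2s}\,d\gamma\,d\sigma$ is bounded uniformly in $\beta$ — this is exactly where $s<\tfrac12$ is used. The estimate for $\psi^*$, with the roles of $\alpha$ and $\beta$ (and $Y_2$ in place of $X_2$) exchanged, is identical.

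The main obstacle is the truncation term $\int_y^x|F(t',x)|\,dt'$ in part~(i): it is the only place where the diagonal singularity of the Gagliardo kernel is genuinely active, and one must be careful both in applying Minkowski's integral inequality over the two-dimensional weighted measure and in correctly identifying the region $\{-2\les y\les t'\les x\les 2\}$ so that the inner $y$-integral of the kernel can be evaluated; Hardy's inequality is precisely what converts the resulting weighted $L^2$ quantity into an $\dot{H}^s$ norm. Everything else — the $L^2$ pieces, the commutator term of part~(i), and the reductions in part~(ii) — is routine bookkeeping with Minkowski's and Hölder's inequalities on the bounded interval $I_2$.
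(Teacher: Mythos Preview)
Your argument is correct and follows essentially the same route as the paper's proof: the same Gagliardo characterisation, the same splitting $g(x)-g(y)=\int_y^x F(t',x)\,dt'+\int_{-2}^y(F(t',x)-F(t',y))\,dt'$ in part~(i) handled by Minkowski plus Hardy, and the same pointwise-in-$\alpha$ reduction with a Fubini/kernel computation in part~(ii). The only slip is notational: where you write ``$L^1_\alpha\subset L^2_\alpha$'' you mean the H\"older-on-bounded-intervals inequality $\|\cdot\|_{L^1_\alpha(I_2)}\lesa\|\cdot\|_{L^2_\alpha(I_2)}$ (i.e.\ $L^2\hookrightarrow L^1$), which is indeed what is needed to recover $\|\p_\beta\phi^*\|_{L^1_\beta L^2_\alpha}$ from $\|\p_\beta\phi^*\|_{L^1_{\alpha,\beta}}$.
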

For the remainder of this paper we  fix $p$,  $q$ such that
    \begin{equation}\label{p defn} \frac{1}{2} = \frac{1}{p} + s \end{equation}
and
    \begin{equation}\label{q defn} \frac{1}{q} = 1 - 2s. \end{equation}
Note that $2q = p$ and for $s<\frac{1}{4}$, we have $2<p<4$ and $1<q<2 $. Also by Sobolev embedding,  $H^s(I) \subset L^p(I)$.  Define the spaces $Y^s$ and $X^s$ by using the norms
            $$ \| \psi \|_{Y^{s}} = \| \psi^* \|_{L^\infty_\alpha H^s_\beta(\Omega_2)} + \| \p_\alpha \psi^* \|_{ L^q_\alpha L^2_\beta(\Omega_2)} + \| \p_\alpha \psi^* \|_{L^1_\alpha H^s_\beta(\Omega_2)} $$
and
            $$ \| \phi \|_{X^{s}} = \| \phi^* \|_{L^\infty_\beta H^s_\alpha(\Omega_2)}+ \| \p_\beta \phi^* \|_{ L^q_\beta L^2_\alpha(\Omega_2)} + \| \p_\beta \phi^* \|_{L^1_\beta H^s_\alpha(\Omega_2)}.$$
It is easy to see that if $f \in H^s$ then solutions to
        \begin{align*} \p_t \psi + \p_x \psi &= 0 \\
                                            \psi(0)&= f \end{align*}
lie in $Y^{s}$ for any $q$. A similar comment applies for the space $X^{s}$. Furthermore, we have the following properties.

\begin{proposition}\label{persist emb}
Let  $0<s<\frac{1}{2}$.
\begin{enumerate}
\item For any $0<T<2$ we have the embeddings
                $$\| \psi \|_{L^\infty_t H^s_x(I_T \times I_{2-T} ) } \lesa \| \psi \|_{Y^{s}}$$
      and
                $$ \| \phi \|_{L^\infty_t H^s_x(I_T \times I_{2-T})} \lesa \| \phi \|_{X^{s}}.$$

\item Suppose $\p_t \psi + \p_x \psi = F$ with $\psi(0) = f$ and $f, F \in C^\infty_0$. Then
            $$ \| \psi \|_{Y^{s}} \lesa \| f\|_{H^s(I_2)} + \| F^* \|_{L^q_\alpha L^2_\beta(\Omega_2)} + \| F^* \|_{L^1_\alpha H^s_\beta(\Omega_2)}.$$
      Similarly, if $\p_t \phi - \p_x \phi = G$ with $\phi(0) = g$ and $g, G \in C^\infty_0$, then
            $$ \| \phi \|_{X^{s}} \lesa \| g\|_{H^s(I_2)} + \| G^* \|_{L^q_\beta L^2_\alpha(\Omega_2)} + \| G^* \|_{L^1_\beta H^s_\alpha(\Omega_2)}.$$
\end{enumerate}
\begin{proof}
We begin by proving $(i)$. Write
        $$\psi(t, x) = \int_0^{x+t} \p_\alpha \psi^*(\gamma, x-t) d\gamma + \psi^*(0 , x-t).$$
Since $(t, x) \in I_T \times I_{2-T}$ we have $|x-t| < 2$ and so
       $$ \| \psi^*(0, x-t)\|_{H^s_x(I_{2-T})} \les \| \psi^*(0, \beta) \|_{H_\beta^s(I_{2})}.$$
Together with a slight modification of Proposition \ref{int est} we see that
        \begin{align*}
            \| \psi(t, x) \|_{H^s_x(I_T)} &\les \Big\| \int_0^{x+t} \p_\alpha \psi^*(\gamma, x-t) d\gamma \Big\|_{H^s_x(I_{2-T})} + \| \psi^*(0, x-t) \|_{H^s_x(I_{2-T})} \\
            &\les \| \p_\alpha \psi^*(\alpha, x-t) \|_{L^1_\alpha H^s_x(I_2 \times I_{2-T} )} + \| \psi^*(0, \beta) \|_{H^s_\beta(I_2)} \\
            &\les \| \psi\|_{Y^{s}}.
        \end{align*}
The proof of the remaining estimate in $(i)$ is similar.

To prove $(ii)$ we follow the proof of Theorem \ref{energy} and write the solution $\psi$ as
            $$ \psi(t, x) = \psi^*(\alpha, \beta) = f(\beta) + \frac{1}{2} \int_\beta^\alpha F^*(\gamma, \beta) d\gamma.$$
Applying Proposition \ref{int est} we obtain $(ii)$ for $\psi$. The inequality for $\phi$ is similar and we omit the details.

\end{proof}
\end{proposition}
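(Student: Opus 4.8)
The plan is to read both statements directly off the explicit solution formula for the one-dimensional transport equation, reducing each non-trivial term to Proposition \ref{int est}(i) together with Minkowski's inequality. Throughout I treat only $\psi$ and the propagator $\p_t + \p_x$; the statements for $\phi$ and $X^{s}$ follow by the reflection $x \mapsto -x$, which interchanges the null coordinates $\alpha$ and $\beta$ and converts $\p_t - \p_x$ into $\p_t + \p_x$. Since the estimate in (i) is stated entirely in terms of norms, by density it suffices to prove it for $\psi \in C^\infty$.

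For (i), fix $t$ with $|t| < T$ and $x$ with $|x| < 2 - T$. Then $|x - t| \les |x| + |t| < 2$ and likewise $|x + t| < 2$, so the whole segment joining $(0, x - t)$ to $(x + t, x - t)$ lies in $\Omega_2$, and the fundamental theorem of calculus in the $\alpha$ variable yields
$$\psi(t, x) = \psi^*(x + t, x - t) = \int_0^{x + t} \p_\alpha \psi^*(\gamma, x - t)\, d\gamma + \psi^*(0, x - t).$$
The last term is a function of $x - t$ alone, so its $H^s_x(I_{2 - T})$ norm is bounded by $\|\psi^*(0, \beta)\|_{H^s_\beta(I_2)} \lesa \|\psi\|_{Y^{s}}$. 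For the first term I apply a version of Proposition \ref{int est}(i): the operator $F \mapsto \int_0^{x + t} F(\gamma, x - t)\, d\gamma$ differs from the one appearing there only through the affine-in-$x$ (unit slope) expressions in the upper limit of integration and in the second slot, so the same argument — splitting the difference into a piece near the diagonal controlled by Hardy's inequality and a remainder controlled by Minkowski's inequality — gives
$$\Big\| \int_0^{x + t} \p_\alpha \psi^*(\gamma, x - t)\, d\gamma \Big\|_{H^s_x(I_{2 - T})} \lesa \| \p_\alpha \psi^*(\alpha, x - t) \|_{L^1_\alpha H^s_x(I_2 \times I_{2 - T})}.$$
Substituting $\beta = x - t$, which keeps $\beta$ in $I_2$, bounds the right-hand side by $\lesa \| \p_\alpha \psi^* \|_{L^1_\alpha H^s_\beta(\Omega_2)} \lesa \|\psi\|_{Y^{s}}$. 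Adding the two contributions and taking the supremum over $|t| < T$ gives (i).

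For (ii), I follow the proof of Proposition \ref{energy}: solving $\p_t \psi + \p_x \psi = F$ with $\psi(0) = f$ by Duhamel and passing to null coordinates gives
$$\psi^*(\alpha, \beta) = f(\beta) + \frac{1}{2} \int_\beta^\alpha F^*(\gamma, \beta)\, d\gamma,$$
so that $\p_\alpha \psi^*(\alpha, \beta) = \frac{1}{2} F^*(\alpha, \beta)$. Consequently the last two terms of $\|\psi\|_{Y^{s}}$ equal $\frac{1}{2}\|F^*\|_{L^q_\alpha L^2_\beta(\Omega_2)}$ and $\frac{1}{2}\|F^*\|_{L^1_\alpha H^s_\beta(\Omega_2)}$, which are already on the right-hand side. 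For the first term, at fixed $\alpha \in I_2$ the contribution of $f(\beta)$ is at most $\|f\|_{H^s(I_2)}$, while $\|\int_\beta^\alpha F^*(\gamma, \beta)\, d\gamma\|_{H^s_\beta(I_2)} \lesa \|F^*\|_{L^1_\alpha H^s_\beta(\Omega_2)}$ by Proposition \ref{int est}(i) — again in the trivially modified form in which the fixed endpoint $-2$ of the inner integral is replaced by the constant $\alpha \in I_2$, which only shortens the interval of integration. Taking the supremum over $\alpha$ bounds $\|\psi^*\|_{L^\infty_\alpha H^s_\beta(\Omega_2)}$ and completes (ii).

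The only point that requires any care is the ``slight modification'' of Proposition \ref{int est}(i) used above, and I expect that to be the main (though still routine) obstacle: one must verify that the proof of that proposition is unaffected by (a) replacing the fixed lower endpoint of the inner integral by a running constant still lying in $I_2$, and (b) composing the upper limit of integration and the inner argument with affine functions of unit slope. In case (a) the estimate only improves, and in case (b) every $|x - y|$ factor in the kernel is preserved, so Hardy's inequality still applies on the bounded interval $I_2$; everything else is an application of Minkowski's inequality and bookkeeping with the inclusion $I_T \times I_{2 - T} \subset \Omega_2$.
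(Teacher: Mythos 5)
Your proposal is correct and follows essentially the same route as the paper: the fundamental theorem of calculus in the $\alpha$ variable plus the translated version of Proposition \ref{int est}(i) for part (i), and the explicit Duhamel formula in null coordinates (with $\p_\alpha\psi^*=\tfrac12 F^*$) plus Proposition \ref{int est} for part (ii). Your extra care about why the ``slight modification'' of Proposition \ref{int est}(i) is harmless (unit-slope affine substitutions preserve the kernel $|x-y|^{-1-2s}$, and moving the fixed endpoint only helps) is exactly the point the paper leaves implicit.
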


We will also need the following version of the decomposition of Proposition \ref{decomp}.

\begin{lemma}\label{persist est}
Assume  $f, g \in C^\infty_0$ and $|m|\les 1$. Let $(\psi, \phi) \in C^\infty$ be the corresponding solution to (\ref{Dirac}). Then we can write $(\psi, \phi) = (\psi_L, \phi_L) + (\psi_N, \phi_N)$ with $$|\psi_L(t, x) |= |f(x-t)|, \qquad  |\phi_L(t, x)|  = |g(x+t)|,$$ and $(\psi_N, \phi_N)$ satisfies
                    $$ \| \psi^*_N \|_{L^\infty_{\alpha, \beta}(\Omega_2)} + \| \phi_N^* \|_{L^\infty_{\alpha, \beta}(\Omega_2)}
                                \lesa \| \psi \|_{Y_2} + \| \phi\|_{X_2}.$$

\begin{proof}
We begin by using the same decomposition as in Proposition \ref{decomp}, $$(\psi, \phi) = (\psi_L, \phi_L) + (\psi_N, \phi_N),$$ where we recall that
 $|\psi_L^*(\alpha, \beta)| = |f(\beta)|$, $|\phi_L^*(\alpha, \beta)| = |g(\alpha)|$, and
    $$|\psi_N^*(\alpha, \beta)|^2 = m \int_\beta^\alpha \Im (\phi \overline{\psi}_N)^*(\gamma, \beta) d\gamma,
    \qquad |\phi_N^*(\alpha, \beta)|^2 = m \int_\beta^\alpha \Im (\psi \overline{\phi}_N)^*(\alpha, \gamma) d\gamma.$$
Estimating $\psi_N$ we get
        $$ \|\psi_N^*(\alpha)\|_{L^\infty_\beta(I_2) }^2 \les \| \phi^* \|_{L^2_\alpha L^\infty_\beta(\Omega_2)}^2 + \int_{-2}^\alpha \|\psi_N^*(\gamma)\|_{L^\infty_\beta(I_2)}^2 d\gamma$$
and so the estimate for $\psi_N$ follows by an application of Gronwall's inequality together with Lemma \ref{est}. The estimate for $\phi_N$ is similar.
\end{proof}
\end{lemma}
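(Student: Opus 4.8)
The plan is to repeat the splitting of Proposition \ref{decomp}, but to track the $L^\infty$ norm in null coordinates rather than the $L^2$-based quantity. Concretely, I would let $(\psi_N,\phi_N)$ solve the system obtained from (\ref{Dirac}) by keeping the mass terms $-im\phi$, $-im\psi$ as (given) forcing while the cubic terms $-i2\lambda|\phi|^2\psi_N$, $-i2\lambda|\psi|^2\phi_N$ are the only place the unknowns appear, with zero initial data; and let $(\psi_L,\phi_L)$ solve $\p_t\psi_L+\p_x\psi_L=-i2\lambda|\phi|^2\psi_L$, $\p_t\phi_L-\p_x\phi_L=-i2\lambda|\psi|^2\phi_L$ with data $(f,g)$. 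Both systems are linear in the unknown with smooth coefficients (since $(\psi,\phi)$ is a fixed smooth solution), so they have unique smooth solutions, and uniqueness for (\ref{Dirac}) forces $(\psi,\phi)=(\psi_L,\phi_L)+(\psi_N,\phi_N)$. Since $|\psi_L|^2$ and $|\phi_L|^2$ satisfy the homogeneous transport equations $(\p_t+\p_x)|\psi_L|^2=0$ and $(\p_t-\p_x)|\phi_L|^2=0$, we obtain $|\psi_L(t,x)|=|f(x-t)|$ and $|\phi_L(t,x)|=|g(x+t)|$.

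Next I would derive the pointwise identities for $\psi_N,\phi_N$: computing $(\p_t+\p_x)|\psi_N|^2$ and noting the cubic contribution $-i2\lambda|\phi|^2|\psi_N|^2$ is purely imaginary leaves $(\p_t+\p_x)|\psi_N|^2=2m\,\Im(\phi\overline\psi_N)$, and similarly $(\p_t-\p_x)|\phi_N|^2=2m\,\Im(\psi\overline\phi_N)$. Integrating along the corresponding characteristics from the vanishing data and changing into null coordinates yields $|\psi_N^*(\alpha,\beta)|^2=m\int_\beta^\alpha\Im(\phi\overline\psi_N)^*(\gamma,\beta)\,d\gamma$ and $|\phi_N^*(\alpha,\beta)|^2=m\int_\beta^\alpha\Im(\psi\overline\phi_N)^*(\alpha,\gamma)\,d\gamma$.

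Then comes the estimate. Using $|m|\le 1$ and $ab\le\tfrac12(a^2+b^2)$, I would bound $|\psi_N^*(\alpha,\beta)|^2\lesa\int_{-2}^\alpha|\phi^*(\gamma,\beta)|^2\,d\gamma+\int_{-2}^\alpha|\psi_N^*(\gamma,\beta)|^2\,d\gamma$ on $\Omega_2$, take the $L^\infty_\beta(I_2)$ norm, and use Minkowski's inequality to move the supremum inside the $\gamma$-integrals, giving $\|\psi_N^*(\alpha)\|_{L^\infty_\beta(I_2)}^2\lesa\|\phi^*\|_{L^2_\alpha L^\infty_\beta(\Omega_2)}^2+\int_{-2}^\alpha\|\psi_N^*(\gamma)\|_{L^\infty_\beta(I_2)}^2\,d\gamma$. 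Since everything is smooth, $\alpha\mapsto\|\psi_N^*(\alpha)\|_{L^\infty_\beta(I_2)}$ is finite and continuous, so Gronwall's inequality over a fixed bounded $\alpha$-interval gives $\|\psi_N^*\|_{L^\infty_{\alpha,\beta}(\Omega_2)}\lesa\|\phi^*\|_{L^2_\alpha L^\infty_\beta(\Omega_2)}$, and Lemma \ref{est} bounds the latter by $\|\phi\|_{X_2}$. The symmetric argument, with Gronwall run in the $\beta$ variable, gives $\|\phi_N^*\|_{L^\infty_{\alpha,\beta}(\Omega_2)}\lesa\|\psi\|_{Y_2}$; summing the two bounds finishes the proof.

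There is no real obstacle here: the content is entirely in the observation, already exploited in Proposition \ref{decomp}, that $|\psi_N|^2$ loses the cubic term and is driven only by the mass. The two things to be careful about are that the natural quantity appearing after the Cauchy--Schwarz and Gronwall steps is the mixed norm $\|\cdot\|_{L^2_\alpha L^\infty_\beta}$ — exactly what Lemma \ref{est} controls by $\|\phi\|_{X_2}$ — and that the $\alpha$- and $\beta$-integration ranges in the two identities sit the right way round for the Gronwall argument, which they do because the data is posed on $\{\alpha=\beta\}$.
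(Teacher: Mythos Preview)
Your proposal is correct and follows essentially the same route as the paper: the same decomposition from Proposition~\ref{decomp}, the same pointwise identity $|\psi_N^*(\alpha,\beta)|^2=m\int_\beta^\alpha\Im(\phi\overline\psi_N)^*(\gamma,\beta)\,d\gamma$, the same Cauchy--Schwarz plus Gronwall step leading to $\|\psi_N^*(\alpha)\|_{L^\infty_\beta}^2\lesa\|\phi^*\|_{L^2_\alpha L^\infty_\beta(\Omega_2)}^2+\int_{-2}^\alpha\|\psi_N^*(\gamma)\|_{L^\infty_\beta}^2\,d\gamma$, and the same appeal to Lemma~\ref{est}. Your write-up is simply more explicit about the construction of $(\psi_L,\phi_L)$ and $(\psi_N,\phi_N)$ and about why the integration limits are compatible with Gronwall.
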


 The technical details are in place and we are now able to prove a local version of Theorem \ref{local bound}.

\begin{theorem}\label{local persist}

Let $0<s<\frac{1}{4}$ and $\frac{1}{2} = \frac{1}{p} + s$. There exists $0<\epsilon^*<1$ such that for any $|m|<\epsilon^*$ and $f, g \in C^\infty_0$ with
        $$ \| f\|_{L^p(\RR)} + \| g \|_{L^p(\RR)} < \epsilon^*,$$
the corresponding solution $(\psi, \phi ) \in C^\infty$ to (\ref{Dirac}) satisfies
        $$ \| \psi \|_{L^\infty_t H^s_x( I_{1} \times I_{1} )} + \| \phi \|_{L^\infty_t H^s_x( I_{1} \times I_{1})} \lesa \|f \|_{H^s(I_2)} + \| g\|_{H^s(I_2)}$$
with constant independent of $f$, $g$, and $m$.
\begin{proof}
By Proposition \ref{persist emb} it suffices to prove
                $$ \| \psi \|_{Y^{s}} + \| \phi \|_{X^{s}} \lesa \|f \|_{H^s(I_2)} + \| g\|_{H^s(I_2)} $$
Since $p>2$, by taking $\epsilon^*< \epsilon$, where $\epsilon$ is the constant appearing in Theorem \ref{localised}, we have a smooth solution\footnote{Note that the classical smooth solution from the initial data $f, g$ belongs to $Y_2 \times X_2$ and so agrees with the solution given by Theorem \ref{localised}. } $(\psi, \phi)$ such that
            $$ \| \psi \|_{Y_2} + \|\phi \|_{X_2}  < 2\epsilon^*. $$
An application of Proposition \ref{persist emb} gives the estimate
        \begin{equation}\label{local persist eqn1} \| \psi \|_{Y^s} \lesa \| f\|_{H^s(I_2)}  + \| m \phi^*  + 2 \lambda |\phi^*|^2 \psi^* \|_{L^q_\alpha L^2_\beta(\Omega_2)} + \| m \phi^*  + 2\lambda |\phi^*|^2 \psi^* \|_{L^1_\alpha H^s_\beta(\Omega_2)}. \end{equation}
For the second term, noting that $2q=p$ and $1<q<2$, we have by Lemma \ref{persist est}
        \begin{align*}
      \| m \phi^*  + \lambda |\phi^*|^2 \psi^* \|_{L^q_\alpha L^2_\beta(\Omega_2)}   &\lesa
               \epsilon^* \|  \phi^* \|_{L^2_\alpha L^\infty_\beta(\Omega_2)} +  \big\| |\phi^*|^2 \big\|_{L^{q}_\alpha L^\infty_\beta(\Omega_2)} \| \psi \|_{L^\infty_\alpha L^2_\beta(\Omega_2)} \\
      &\lesa \epsilon^* \| \phi \|_{X_2} +  \Big( \| g \|_{L^{2q} (I_2) }^2
                        + \| \phi_N \|^2_{L^\infty_{\alpha,\beta}(\Omega_2)} \Big) \| \psi \|_{Y_2} \\
      &\lesa  \epsilon^* \| \phi \|_{X_2} + \Big( \| g \|_{L^{p} (I_2) }^2 + \| \phi \|^2_{X_2} \Big) \| \psi \|_{Y_2}. \end{align*}
Thus taking $\epsilon^*>0$ sufficiently small, we have
        $$  \| m \phi^*  + \lambda |\phi^*|^2 \psi^* \|_{L^q_\alpha L^2_\beta(\Omega_2)} \les \frac{1}{8} \big( \| \psi \|_{Y^s} + \|\phi\|_{X^s} \big). $$
For the third term in (\ref{local persist eqn1}) we need to estimate $\| \phi^* \|_{L^1_\alpha H^s_\beta(\Omega_2)}$ and $\big\| |\phi^*|^2 \psi^*\big\|_{L^1_\alpha H^s_\beta(\Omega_2)}$. We can control the first term by using $(ii)$ in Proposition \ref{int est} while for the cubic term by Theorem \ref{Intrins} we have for all $\alpha \in I_2$
        $$ \big\| |\phi^*|^2 \psi^* \big\|_{H^s_\beta(I_2)} \lesa \| \phi^*\|_{L^\infty_\beta(I_2)}^2 \| \psi^*\|_{H^s_\beta(I_2)} + \| \phi^*\|_{L^\infty_\beta(I_2)} \| \phi^* \|_{ W^{1, q}_\beta(I_2)} \| \psi^* \|_{L^p_\beta(I_2)}.$$
Therefore,
        \begin{align*}
            \big\| |\phi^*|^2 \psi^* \big\|_{L^1_\alpha H^s_\beta(\Omega_2)} &\lesa \| \phi^* \|^2_{L^2_\alpha L^\infty_\beta(\Omega_2)} \| \psi^* \|_{L^\infty_\alpha H^s_\beta(\Omega_2)} + \| \phi^* \|_{L^2_\alpha L^\infty_\beta(\Omega_2)} \| \phi^*\|_{L^2_\alpha W^{1, q}_\beta(\Omega_2)} \| \psi^* \|_{L^\infty_\alpha L^p_\beta(\Omega_2)} \\
                   % &\lesa \| \phi \|_{X_R}^2 \| \psi \|_{Y^s_R} + \|\phi \|_{X_R} \| \phi\|_{X^s_R} \big( \|f \|_{L^p(I_R)} + \| \psi^*_N \|_{L^\infty_{\alpha, \beta}(\Omega_R)} \big) \\
                    &\lesa \| \phi \|_{X_2}^2 \| \psi \|_{Y^s} + \|\phi \|_{X_2} \|\phi \|_{X^s} \big( \|f \|_{L^p(I_2)} + \|\psi \|_{Y_2}\big)\\
                    &\lesa \big(\epsilon^*\big)^2 \big( \| \psi \|_{Y^s} + \| \phi \|_{X^s} \big)
        \end{align*}
where we used Lemma \ref{persist est} together with the characterisation
        $$ \| f \|_{W^{1, p}(I_2)}  \backsimeq  \| f\|_{L^p(I_2)} + \| \p_x f \|_{L^p(I_2)}$$
which follows from the proof of Theorem \ref{Intrins}.
Thus provided we choose $\epsilon^*$ sufficiently small, we obtain
            $$\| \psi \|_{Y^s}  \les C \| f\|_{H^s(I_2)} + \frac{1}{4} \big( \| \psi \|_{Y^s} + \| \phi \|_{X^s} \big).$$
A similar argument shows
            $$\| \phi \|_{X^s}  \les C \| g\|_{H^s(I_2)} + \frac{1}{4} \big( \| \psi \|_{Y^s} + \| \phi \|_{X^s} \big)$$
and so result follows.

\end{proof}
\end{theorem}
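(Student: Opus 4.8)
The plan is to run the a priori estimate entirely in the null-coordinate spaces $Y^s$ and $X^s$ on $\Omega_2$, mimicking the $L^2$ argument of Theorem \ref{localised} but now keeping track of $s$ derivatives in the null directions. By part $(i)$ of Proposition \ref{persist emb} it suffices to prove the single estimate
\[ \| \psi \|_{Y^s} + \| \phi \|_{X^s} \lesa \| f\|_{H^s(I_2)} + \| g\|_{H^s(I_2)} \]
with constant independent of $f,g,m$. As a preliminary, since $\frac1p<\frac12$, H\"older on the bounded interval $I_2$ upgrades the hypothesis $\|f\|_{L^p}+\|g\|_{L^p}<\epsilon^*$ to $\|f\|_{L^2(I_2)}+\|g\|_{L^2(I_2)}\lesa\epsilon^*$; hence for $\epsilon^*$ small and $|m|<\epsilon^*<\tfrac1{32}$ (so $R=2$ is admissible) Theorem \ref{localised} applies on $\Omega_2$, the classical smooth solution from $(f,g)\in C^\infty_0$ lies in $Y_2\times X_2$ and so coincides with it, and Lipschitz dependence through the zero solution gives $\|\psi\|_{Y_2}+\|\phi\|_{X_2}\lesa\epsilon^*$. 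This is the ``already small'' quantity every absorption below will feed on; note also $Y_2\hookrightarrow Y^s$ and $X_2\hookrightarrow X^s$ since $L^2\hookrightarrow H^s$ on bounded intervals.

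Next I would apply part $(ii)$ of Proposition \ref{persist emb} to $\p_t\psi+\p_x\psi=F$ with $F=-im\phi-2i\lambda|\phi|^2\psi$, giving
\[ \| \psi \|_{Y^s} \lesa \| f\|_{H^s(I_2)} + \| F^* \|_{L^q_\alpha L^2_\beta(\Omega_2)} + \| F^* \|_{L^1_\alpha H^s_\beta(\Omega_2)}, \]
and estimate the two nonlinear pieces separately. For $\|F^*\|_{L^q_\alpha L^2_\beta}$, the mass term contributes $|m|\,\|\phi^*\|_{L^q_\alpha L^2_\beta}\lesa\epsilon^*\|\phi^*\|_{L^2_\alpha L^\infty_\beta}\lesa\epsilon^*\|\phi\|_{X_2}$ by H\"older in $\alpha$ and Lemma \ref{est}, while the cubic term, by H\"older in $\beta$ then in $\alpha$, is $\lesa\|\phi^*\|_{L^{2q}_\alpha L^\infty_\beta}^2\,\|\psi^*\|_{L^\infty_\alpha L^2_\beta}$; since $2q=p$, the decomposition $\phi=\phi_L+\phi_N$ of Lemma \ref{persist est} (with $|\phi_L^*|=|g(\alpha)|$) yields $\|\phi^*\|_{L^{2q}_\alpha L^\infty_\beta}^2\lesa\|g\|_{L^p(I_2)}^2+\|\phi_N^*\|_{L^\infty_{\alpha,\beta}}^2\lesa(\epsilon^*)^2$, and keeping $\|\psi^*\|_{L^\infty_\alpha L^2_\beta}\le\|\psi\|_{Y^s}$ to be absorbed we get $\|F^*\|_{L^q_\alpha L^2_\beta}\lesa\epsilon^*\big(\|\psi\|_{Y^s}+\|\phi\|_{X^s}\big)$.

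For the harder term $\|F^*\|_{L^1_\alpha H^s_\beta}$, the mass contribution is $|m|\,\|\phi^*\|_{L^1_\alpha H^s_\beta}\lesa\epsilon^*\|\phi\|_{X_2}$ directly from part $(ii)$ of Proposition \ref{int est}. For the cubic contribution I would invoke the fiberwise product estimate Theorem \ref{Intrins}$(iii)$ (legitimate since $\tfrac12=\tfrac1p+s$ and $s<\tfrac1q<1$ when $0<s<\tfrac14$): for each fixed $\alpha$,
\[ \big\| |\phi^*|^2\psi^* \big\|_{H^s_\beta(I_2)} \lesa \|\phi^*\|_{L^\infty_\beta(I_2)}^2\|\psi^*\|_{H^s_\beta(I_2)} + \|\phi^*\|_{L^\infty_\beta(I_2)}\|\phi^*\|_{W^{1,q}_\beta(I_2)}\|\psi^*\|_{L^p_\beta(I_2)}, \]
and then integrate in $\alpha$ with H\"older and Lemma \ref{est}. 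The first term is $\lesa\|\phi^*\|_{L^2_\alpha L^\infty_\beta}^2\|\psi^*\|_{L^\infty_\alpha H^s_\beta}\lesa\|\phi\|_{X_2}^2\|\psi\|_{Y^s}$; the second is $\lesa\|\phi^*\|_{L^2_\alpha L^\infty_\beta}\,\|\phi^*\|_{L^2_\alpha W^{1,q}_\beta}\,\|\psi^*\|_{L^\infty_\alpha L^p_\beta}$, where $\|\phi^*\|_{L^2_\alpha L^\infty_\beta}\lesa\|\phi\|_{X_2}\lesa\epsilon^*$, $\|\psi^*\|_{L^\infty_\alpha L^p_\beta}\lesa\|f\|_{L^p(I_2)}+\|\psi\|_{Y_2}\lesa\epsilon^*$ (via $\psi=\psi_L+\psi_N$ and $H^s_\beta\hookrightarrow L^p_\beta$), and $\|\phi^*\|_{L^2_\alpha W^{1,q}_\beta}\lesa\|\phi\|_{X^s}$ is obtained by substituting $\p_\beta\phi^*=\tfrac{i}{2}\big(m\psi^*+2\lambda|\psi^*|^2\phi^*\big)$ from the equation and repeating the H\"older/Lemma \ref{persist est} bookkeeping. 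Hence the cubic contribution is $\lesa(\epsilon^*)^2\big(\|\psi\|_{Y^s}+\|\phi\|_{X^s}\big)$, so $\|F^*\|_{L^1_\alpha H^s_\beta}\lesa\epsilon^*\big(\|\psi\|_{Y^s}+\|\phi\|_{X^s}\big)$. Combining gives $\|\psi\|_{Y^s}\le C\|f\|_{H^s(I_2)}+\tfrac14\big(\|\psi\|_{Y^s}+\|\phi\|_{X^s}\big)$ for $\epsilon^*$ small; the symmetric estimate for $\phi$ (using $\p_t\phi-\p_x\phi=G$, $G=-im\psi-2i\lambda|\psi|^2\phi$, with the roles of $\alpha$ and $\beta$ interchanged) gives $\|\phi\|_{X^s}\le C\|g\|_{H^s(I_2)}+\tfrac14\big(\|\psi\|_{Y^s}+\|\phi\|_{X^s}\big)$, and adding the two and absorbing the factor $\tfrac12$ on the right finishes the proof.

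The step I expect to be the main obstacle is precisely the cubic term in the $H^s_\beta$-valued norm: because $Y^s$ and $X^s$ carry the null derivative only in $L^1$ rather than $L^2$, one cannot simply distribute a fractional derivative by Leibniz, and must instead lean on the intrinsic characterisation of $H^s(I_2)$ and the trilinear estimate of Theorem \ref{Intrins}$(iii)$, being careful throughout to route all of the $\epsilon^*$-smallness onto the $Y_2$, $X_2$ and $L^p$-of-data factors so that at most one $Y^s$ or $X^s$ norm survives in each term and the resulting bound is genuinely linear in the unknown $\|\psi\|_{Y^s}+\|\phi\|_{X^s}$.
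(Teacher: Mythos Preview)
Your proof is correct and follows essentially the same route as the paper's: reduce to the $Y^s\times X^s$ estimate via Proposition~\ref{persist emb}, use Theorem~\ref{localised} for smallness of $\|\psi\|_{Y_2}+\|\phi\|_{X_2}$, and then control the $L^q_\alpha L^2_\beta$ and $L^1_\alpha H^s_\beta$ pieces of the forcing via Lemma~\ref{persist est}, Proposition~\ref{int est}(ii), and Theorem~\ref{Intrins}(iii), absorbing a linear factor of $\|\psi\|_{Y^s}+\|\phi\|_{X^s}$ at the end.

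Two small remarks. First, your parenthetical claim that $Y_2\hookrightarrow Y^s$ ``since $L^2\hookrightarrow H^s$ on bounded intervals'' is false (the embedding goes the other way for $s>0$); fortunately you never actually use it. Second, your justification of $\|\phi^*\|_{L^2_\alpha W^{1,q}_\beta}\lesa\|\phi\|_{X^s}$ by substituting the equation for $\partial_\beta\phi^*$ is more circuitous than necessary: the paper simply observes that $\|\partial_\beta\phi^*\|_{L^2_\alpha L^q_\beta}\le\|\partial_\beta\phi^*\|_{L^q_\beta L^2_\alpha}$ by Minkowski (since $q<2$), which is a component of the $X^s$ norm directly.
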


Finally we come to the proof of Theorem \ref{local bound}.

\begin{proof}[Proof of Theorem \ref{local bound}]
Let $\epsilon^*>0$ be the constant from Theorem \ref{local persist}. Assume $f, g \in H^s$ satisfy (\ref{local bound eqn1}) and $|m|<\epsilon^*$. Choose a smooth approximating sequence $f_n, g_n \in C^\infty_0$ converging to $f, g$ in $H^s$. Note that we may also assume $f_n, g_n$ satisfy (\ref{local bound eqn1}) for every $n \in \NN$. Suppose for the moment that we had the estimate
            \begin{equation}\label{local bound eqn2} \| \psi_n \|_{L^\infty_t H^s_x (I_{1} \times \RR)} + \|\phi_n\|_{L^\infty_t H^s_x(I_{1} \times\RR)} \lesa \|f_n \|_{H^s(\RR) } + \|g_n\|_{H^s(\RR)}  \end{equation}
with the constant independent of $n\in \NN$. The continuous dependence on initial data proven by Selberg and Tesfahun in \cite{Selberg2010b} implies that $(\psi_n, \phi_n)$ converges to a solution $(\psi, \phi) \in C([-T^*, T^*], H^s)$ with possibly\footnote{The proof of local existence contained in \cite{Selberg2010b} gives a time of existence $T^*$ depending on the size of the initial data in $H^s$.} $T^*<1$. The uniform bound (\ref{local bound eqn2}) on the interval $[-1, 1]$ implies we can repeat the $H^s$ local existence result of \cite{Selberg2010b} and extend the solution to (at least) the interval $[-1, 1]$. Thus we obtain $(\psi , \phi ) \in C([-1,1], H^s)$ as required.

It remains to prove (\ref{local bound eqn2}). To this end assume $f, g \in C^\infty_0$ and let $(\psi, \phi)$ be the corresponding smooth solution. Similar to the proof of Theorem \ref{local} we take $I_R(x) = [ x - R, x+R]$. Since the Dirac equation is invariant under translation by Theorem \ref{local persist} we have for every $j \in \ZZ$
        $$ \| \psi \|_{L^\infty_t H^s_x( I_1 \times I_1(j) )} + \| \phi \|_{L^\infty_t H^s_x(I_1 \times I_1(j))} \lesa
        \| f\|_{H^s(I_2(j))} + \| g \|_{H^s(I_2(j))}.$$
Therefore, by  $(ii)$ in Theorem \ref{Intrins} we have for every $|t|\les 1$
            \begin{align*}
                \| \psi(t) \|_{H^s_x}^2 + \| \phi(t) \|_{H^s_x}^2 &\lesa \sum_{j \in \ZZ} \Big( \| \psi(t) \|_{H^s_x(I_{1}(j))}^2 + \| \phi(t) \|_{H^s_x(I_{1}(j))}^2 \Big)\\
                                                              &\lesa \sum_{j \in \ZZ}  \Big( \| f\|_{H^s(I_2(j))}^2 + \| g\|_{H^s(I_2(j))}^2\Big)\\
                                                              &\lesa \| f\|_{H^s}^2 + \|g\|_{H^s}^2.\end{align*}
 Thus the inequality  (\ref{local bound eqn2}) holds and the result follows.

\end{proof}

\appendix
\section{Intrinsic Definition of $H^s(I)$}

 For the convenience of the reader, sketch the proof of Theorem \ref{Intrins}. The inequalities are all well known, see for instance \cite{Tartar2007} page 169 for a  more general version\footnote{We should mention that the $W^{s, p}$ spaces defined in \cite{Tartar2007} do not agree with the Sobolev spaces defined in the present paper. More precisely  in \cite{Tartar2007} the author takes $W^{s, p} = B^s_{p, p}$, where $B^s_{p, q}$ is the Besov-Lipschitz space.  Thus the definitions only agree in the case $p=2$.} of $(i)$. Part $(ii)$ is essentially a corollary of $(i)$ while the last inequality $(iii)$ is an application of Littlewood-Paley Theory.

\begin{proof}[Proof of Theorem \ref{Intrins}]
We start by proving  $(i)$. For an interval $I_R=[-R, R]$ define
        $$ \| f\|_{\widetilde{H^s}(I_R)}^2 = \| f\|_{L^2(I_R)}^2 + \int_{I_R} \int_{I_R} \frac{| f(x) - f(y) |^2}{|x-y|^{1+2s} }dy dx.$$
We need to show that
            \begin{equation}\label{Intrins eqn1} \| f\|_{H^s(I_R)} \lesa_R \| f\|_{\widetilde{H^s}(I_R)} \lesa_R \| f\|_{H^s(I_R)}. \end{equation}
 The second inequality is straight forward as take any extension $g \in H^s(\RR)$ of $f \in H^s(I_R)$. Then
        \begin{align*}
         \| f\|_{\widetilde{H^s}}^2
                %&=\| f \|_{L^2(\Omega)}^2 + \int_\Omega \int_\Omega \frac{| f(x) - f(y) |^2}{|x-y|^{1+2s} }dy dx
                &\lesa \| g\|_{L^2(\RR)}^2 + \int_{\RR^n} \int_{\RR^n} \frac{|g(x ) - g(y) |^2}{|x-y|^{1+2s}} dxdy \\
                        &\lesa \| g\|_{H^s}^2
         \end{align*}
and so taking the infimum over all extensions $g$, we obtain the second inequality in (\ref{Intrins eqn1}).

The first inequality in (\ref{Intrins eqn1}) is harder to prove. For $f \in L^2(I_R)$ define an extension $\mathcal{E}(f)$ of $f$ by letting,
            \begin{equation}\label{Intrins eqn2}
                \mathcal{E}(f)(x) = \begin{cases} \rho(x) f( \pm 2 R - x) \qquad \qquad & \pm x \g R \\
                                        f(x)                & |x|<R \end{cases} \end{equation}
where $\rho \in C^\infty_0$ with $\rho(x) = 1$ for $|x|<R$, $\supp \rho \subset\{ |x|<2R\}$, and $|\rho(x)|\les 1$ for ever $x \in \RR$. Then
        \begin{align*}
            \| f\|_{H^s(I_R)}^2 \les \|\mathcal{E}(f)\|_{H^s(\RR)}^2 &\lesa \|\mathcal{E}(f) \|_{L^2(\RR)}^2 + \int_\RR \int_\RR \frac{|\mathcal{E}(f)(x) - \mathcal{E}(f)(y) |^2}{ |x-y|^{1+2s}} dxdy \\
                                                      &\lesa \| f\|_{L^2(I_R)} + \int_{I_R} \int_{I_R} + 2\int_{I_R} \int_{ y>R} + 2\int_{I_R} \int_{ y<-R}  + \int_{\RR \setminus I_R} \int_{\RR \setminus I_R}. \end{align*}
The first integral term is obvious. For the second we note that for $|x|<R$ and $ y>R$ we have
            \begin{align*} |\mathcal{E}(f)(x) - \mathcal{E}(f)(y) | &= | f(x) - \rho(y) f(2R-y)| \\
                                                                    &\les |f(x)| |\rho(x) - \rho(y) | + |f(x) - f(2R-y)| \end{align*}
and $   | x - (2R-y)| \les |x - y|$. Hence
        \begin{align*}
        \int_{I_R} \int_{y>R} \frac{|\mathcal{E}(f)(x) - \mathcal{E}(f)(y) |^2}{ |x-y|^{1+2s}} dy dx &\lesa \int_{I_R} \int_{R}^{2R} \frac{|f(x)|^2 |\rho(x) - \rho(y) |^2}{|x-y|^{1+2s}} dy dx\\
                        &\qquad \qquad \qquad \qquad+ \int_{I_R} \int_{ R}^{2R} \frac{|f(x) - f(2R-y) |^2}{ |x-(2R-y)|^{1+2s}} dy dx \\
        &\lesa_{\rho} \| f\|_{L^2(I_R)}^2 \int_0^{3R} r^{1-2s} dr + \int_{I_R} \int_{ I_R } \frac{|f(x) - f(y) |^2}{ |x-y|^{1+2s}} dy dx \\
        &\lesa_{\rho, R} \| f\|_{\widetilde{H^s}(I_R)}^2 .\end{align*}
The other terms are handled similarly and so we obtain (\ref{Intrins eqn1}).

We now prove $(ii)$. The second inequality  follows from a simple application of $(i)$ and so we will concentrate on the first. Since
        $$ \| f\|_{H^s}^2 \lesa \| f\|^2_{L^2} + \int_\RR \int_\RR \frac{|f(x) - f(y) |^2}{ |x-y|^{1+2s}} dx dy $$
it suffices to prove
       \begin{equation}\label{local global eqn3} \int_\RR \int_\RR \frac{|f(x) - f(y) |^2}{ |x-y|^{1+2s}} dx dy \les \sum_{j \in \ZZ} \| f\|_{H^s(I_1(j))}^2 .\end{equation}
Now
    \begin{align*}
        \int_\RR \int_\RR \frac{|f(x) - f(y) |^2}{ |x-y|^{1+2s}} dx dy &= \sum_{j \in \ZZ} \int_{I_{\frac{1}{2}}(j)}  \int_\RR \frac{|f(x) - f(y) |^2}{ |x-y|^{1+2s}} dx dy \\
                        &= \sum_{j \in \ZZ} \int_{I_{\frac{1}{2}(j)}} \int_{I_1(j) }\frac{|f(x) - f(y) |^2}{ |x-y|^{1+2s}} dx dy \\
                        &\qquad \qquad \qquad \qquad \qquad + \int_{I_{\frac{1}{2}}(j)}    \int_{|x-j|>1} \frac{|f(x) - f(y) |^2}{ |x-y|^{1+2s}} dx dy.
    \end{align*}
Part $(i)$ allows us to control the first integral. For the second we have
    \begin{align*}
            \int_{I_{\frac{1}{2}}(j) } \int_{|x-j|>1} \frac{|f(x) - f(y) |^2}{ |x-y|^{1+2s}} dx dy
                                    &\lesa\int_{I_{\frac{1}{2}}(j)} |f(y)|^2 dy + \int_{ |x-j|>1} \frac{ |f(x)|^2 }{ \big( 2 |x-j| - 1\big)^{1+ 2s}} dx \\
                                    &\lesa \| f\|_{L^2(I_1(j))}^2 + \sum_{|j-k|\g1 } \frac{1}{|j-k|^{1+2s}} \| f\|_{L^2(I_1(k))}^2.
\end{align*}

Therefore, as $1+2s >1$, an application of Young's inequality for sequences gives (\ref{local global eqn3}) and so result follows.

Finally we come to the proof of $(iii)$. We begin by fixing $s<\frac{1}{q} <1$. An application of Littlewood-Paley theory\footnote{See for instance the appendix in \cite{Tao2006b}. } together with  Sobolev embedding shows that,
         \begin{align*} \| |g|^2 f \|_{H^s(\RR)}
                    &\lesa \| g\|_{L^\infty(\RR)}^2 \| f\|_{H^s(\RR)} + \| g\|_{L^\infty(\RR)} \| g\|_{W^{s, r}(\RR)} \| f\|_{L^p(\RR)} \\
                    &\lesa \| g\|_{L^\infty(\RR)}^2 \| f\|_{H^s(\RR)} + \| g\|_{L^\infty(\RR)} \| g\|_{W^{1, q}(\RR)} \| f\|_{L^p(\RR)}\end{align*}
where $r=\frac{1}{s}$. To obtain $(iii)$ we make use of the extension operator $\mathcal{E}$ defined above. It is easy to see that $\mathcal{E}$ is bounded on $L^r$ for every $1\les r \les \infty$. Moreover the proof of $(i)$ shows that it is also bounded  on $H^s$. Hence
        \begin{align*}
            \big\| |g|^2 f \big\|_{H^s(I_R)} &\les \big\| |\mathcal{E} (g) |^2 \mathcal{E} (f) \big\|_{H^s(\RR)} \\
                                      &\lesa \| \mathcal{E} (g) \|_{L^\infty(\RR)}^2 \| \mathcal{E} (f) \|_{H^s(\RR)} + \|\mathcal{E} (g)\|_{L^\infty(\RR)} \| \mathcal{E} (g)\|_{W^{1, q}(\RR)} \| \mathcal{E} (f) \|_{L^p(\RR)}\\
                                      &\lesa \| g \|_{L^\infty(I_R)}^2 \| f \|_{H^s(I_R)} + \| g\|_{L^\infty(I_R)} \| \mathcal{E}(g)\|_{W^{1, q}(\RR)} \| f\|_{L^p(I_R)} \end{align*}
and so it suffices to prove that
                $$ \|\mathcal{E} (g) \|_{W^{1, q}(\RR)} \lesa \| g\|_{W^{1, q}(I_R)}.$$
However this follows easily using the characterisation
                $$ \| f \|_{W^{1, p}(\RR)} \backsimeq \| f\|_{L^p(\RR)} + \| \p_x f \|_{L^p(\RR)}.$$
Note that as a consequence of this we have
            $$ \| f \|_{W^{1, p}(I_R)}  \backsimeq_R  \| f\|_{L^p(I_R)} + \| \p_x f \|_{L^p(I_R)}.$$

\end{proof}

\bibliographystyle{amsplain}
\bibliography{DiracPaper}

\end{document}